\pgfplotsset{compat=1.15}
\newcommand{%
    \def\svgwidth{\columnwidth}
    \import{./}{.pdf_tex}
}[1]{%
    \def\svgwidth{\columnwidth}
    \import{./}{#1.pdf_tex}
}
\definecolor{ccqqqq}{rgb}{0.8,0.,0.}
\definecolor{qqqqff}{rgb}{0.,0.,1.}
\definecolor{xfqqff}{rgb}{0.4980392156862745,0.,1.}
\numberwithin{equation}{section}
\newcommand{\eps}{\varepsilon}
\renewcommand{\H}{\mathcal{H}}
\renewcommand{\S}{\mathbb{S}}
\newcommand{\R}{\mathbb{R}}
\newcommand*{\genbf}[1]{\ifmmode\mathbf{#1}\else\textbf{#1}\fi}
\newcommand{\curr}[1]{[\![ #1 ]\!]}
\newcommand{\Lip}{\mathrm{Lip}}
\newcommand{\Irm}{\mathrm{I}}
\newcommand{\Mbf}{\mathbf{M}}
\newcommand{\Tan}{\mathrm{Tan}\,}
\newcommand{\set}{\mathrm{set}}
\def\Gr{\mathcal{G}_n(r_0,\gamma,D, \alpha)}
\newcommand{\Depth}{\operatorname{Depth}}
\newcommand{\diam}{\operatorname{diam}}
\newcommand{\vol}{\operatorname{vol}}
\DeclareRobustCommand{\intprod}{%
  \mathbin{\mathpalette\int@prod{(0.1,0)(0.9,0)(0.9,0.8)}}}
\DeclareRobustCommand{\restrict}{%
  \mathbin{\mathpalette\int@prod{(0.1,0.7)(0.1,0.1)(0.7,0.1)}}\!}	
\newcommand{\int@prod}[2]{%
  \begingroup
  \sbox\z@{$\m@th#1+$}%
  \setlength\unitlength{\wd\z@}%
  \begin{picture}(1,1)
  \roundcap
  \polyline#2
  \end{picture}%
  \endgroup
}
\newtheorem{theorem}{Theorem}[section]
\newtheorem{lemma}[theorem]{Lemma}
\newtheorem{proposition}[theorem]{Proposition}
\theoremstyle{definition}
\newtheorem{definition}[theorem]{Definition}
\newtheorem{remark}[theorem]{Remark}
\newtheorem{example}[theorem]{Example}
\newtheorem*{problem*}{Problem}
\date{\today}
\author{Giacomo Del Nin}
\address[G. Del Nin]{Max Planck Institute for Mathematics in the Sciences, Inselstrasse 22, 04103 Leipzig, Germany.}
\email{\href{mailto:delnin@mis.mpg.de}{giacomo.delnin@mis.mpg.de}}
\author{Raquel Perales}
\address[R. Perales]{CONACyT Research Fellow at the Math Institute of the 
National Autonomous University of Mexico, Oaxaca. Mexico}
\email{\href{mailto:raquel.perales@im.unam.mx}{raquel.perales@im.unam.mx}}
\title[Lipschitz-mass rigidity]{Rigidity of mass-preserving $1$-Lipschitz maps from integral current spaces into $\mathbb{R}^n$}
\date{\today}
\begin{document}
\begin{abstract}
 We prove that given an $n$-dimensional integral current space and a $1$-Lipschitz map, from this space onto the $n$-dimensional Euclidean ball, that preserves the mass of the current and is injective on the boundary, then the map has to be an isometry. We deduce as a consequence a stability result with respect to the intrinsic flat distance, which implies the stability of the positive mass theorem for graphical manifolds as originally formulated by Huang--Lee--Sormani.
\end{abstract}
\maketitle

\section{Introduction}

Integral currents and flat distance are classical notions from Geometric Measure Theory, being employed, for instance,  by Federer--Fleming to solve the Plateau problem  \cite{FF}. Ambrosio--Kirchheim \cite{AK} defined integral currents in metric measure spaces and applying this work  
Sormani--Wenger \cite{SW2} defined integral current spaces and the intrinsic flat distance.
The intrinsic flat distance between two compact oriented Riemannian manifolds of the same dimension $M_i$, endowed with their canonical currents $\curr{M_i}$, is defined as the infimum of the flat distances between the push-forwards of both currents, 
\[
\inf \Big \{d_{F}(\varphi_{1\sharp}\curr{M_1}, \varphi_{2\sharp}\curr{M_2})\Big \},
\]
where the infimum runs over all complete metric spaces $Z$ and distance preserving embeddings $\varphi_i: M_i \to Z$.

Gromov suggested using intrinsic flat convergence to study manifolds of nonnegative scalar curvature \cite{gromov2014}. In addition, Sormani presented open problems and examples, concerning these type of manifolds, emphasizing the suitability of intrinsic flat convergence \cite{sormani2017}. The intrinsic flat distance has shown to be an adequate notion to study several stability problems, such as the stability of the positive mass theorem and the stability of tori with almost non-negative scalar curvature. Sakovich--Sormani \cite{SS} obtained an intrinsic flat stability result for the positive mass theorem for complete rotationally symmetric asymptotically hyperbolic manifolds. Huang--Lee \cite{HL} showed stability of the positive mass theorem with respect to the Federer--Fleming flat distance for a class of asymptotically flat graphical manifolds.  Motivated by this work,  Huang--Lee--Sormani \cite{HLSa} studied the stability of the positive mass theorem with respect to the intrinsic flat distance. The proof of their first result, \cite[{Theorem 1.3}]{HLSa}, consisted in reducing it to the following rigidity of mass-preserving $1$-Lipschitz maps into Euclidean space.

\begin{problem*}
Let $R>0$ and $(\Omega_R,d_R,T_R)$ be an $n$-dimensional integral current space, and let $\psi:(\Omega_R,d_R)\to(\R^{n+1}, d_{Euc})$ be a $1$-Lipschitz function with the following properties:
\begin{enumerate}
    \item $\psi_\sharp T_R=\curr{B_R \times \{0\}}$, where $B_R$ denotes a ball of radius $R$ in the $n$-dimensional Euclidean space 
    \item $\Mbf(T_R)=\Mbf(\psi_\sharp T_R)=\omega_n R^n$
    \item $\psi|_{\set(\partial T_R)}:\set(\partial T_R) \to \R^{n+1}$ is bi-Lipschitz onto its image, $\partial B_R \times \{0\}$.
    \end{enumerate}
Then $\psi$ should be an isometry between $(\Omega_R,d_R)$ and $(B_R,d_{Euc})$. Hence, as integral current spaces, $(\Omega_R,d_R,T_R)= (B_R,d_{Euc},\curr{B_R})$. 
\end{problem*}

When dealing with Euclidean $n$-currents in $\R^n$ associated with connected open sets, and if we further assume that $\psi$ is $C^1$, a positive answer to the Problem is a consequence of the area formula and classical rigidity for maps with gradient in $SO(n)$.  For completeness, and since the metric case uses similar ideas, we report the proof at the beginning of Section \ref{sec:rigidity}.

A positive answer to the Problem was assumed 
by Huang--Lee-Sormani \cite[{Proof of Theorem 1.3}]{HLSa}  without providing a proof or reference, see the corrigendum written by them \cite{HLSb}, and so the main contribution of this manuscript is to provide a proof of this Problem.

\begin{theorem}[Rigidity]\label{thm-rigidity}
Let $(X,d,T)$ be an $n$-dimensional integral current space, and let $\psi:X\to\R^n$ be a $1$-Lipschitz function with the following properties:
\begin{enumerate}
    \item $\psi_\sharp T=\curr{B_1}$;
    \item $\Mbf(T)=\Mbf(\psi_\sharp T)$;
    \item $\psi$ is injective on $\set(\partial T)$, and $\psi(\set(\partial T))\subseteq \partial B_1$.
\end{enumerate}
Then $\psi$ is an isometry between $(X,d)$ and  $(B_1,d_{Euc})$. Hence, as integral current spaces, $(X,d,T)=(B_1,d_{Euc},\curr{B_1})$.
\end{theorem}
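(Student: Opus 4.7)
My plan is to mirror the smooth argument sketched in the paper, replacing classical differential calculus with Kirchheim's metric differential and the Ambrosio--Kirchheim area formula for Lipschitz maps of integer rectifiable metric currents. Writing $T=\theta\,\tau\,\H^n\restrict E$ with integer multiplicity $\theta\ge 1$ and orienting $n$-vector field $\tau$, the area formula yields the chain
\[
\Mbf(\psi_\sharp T)\le\int_E|J\psi|\,\theta\,d\H^n\le\int_E\theta\,d\H^n=\Mbf(T),
\]
where $|J\psi|\le 1$ since $\psi$ is $1$-Lipschitz. The mass equality forces both inequalities to be equalities: the second gives $|J\psi|=1$ $\H^n$-a.e.\ on $E$, while the first (together with the density $1$ of $\curr{B_1}$ and the integrality of $\theta$) simultaneously yields $\theta\equiv 1$ $\H^n$-a.e.\ and the essential injectivity of $\psi|_E$, namely that for $\H^n$-a.e.\ $y\in B_1$ the preimage $\psi^{-1}(y)\cap E$ consists of a single point with the correct orientation. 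One thereby obtains an a.e.-defined measurable right inverse $\varphi:B_1\to X$.

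Next, on any bi-Lipschitz chart $\phi:A\subseteq\R^n\to X$ parameterising a piece of $E$, Kirchheim's theorem supplies the metric differential of $\phi$ at $\mathcal{L}^n$-a.e.\ $a\in A$ as a seminorm $s_a$ on $\R^n$. The $1$-Lipschitz bound on $\psi$ dominates the classical derivative $D(\psi\circ\phi)(a)$ by $s_a$, while the pointwise identity $|J\psi|=1$ compels the Kirchheim Jacobian of $\phi$ to match $|\det D(\psi\circ\phi)(a)|$. These two constraints together force $s_a$ to coincide with the Euclidean norm, so $\phi$ is infinitesimally an isometry and $D(\psi\circ\phi)(a)\in O(n)$ at a.e.\ $a$.

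The main obstacle is upgrading this infinitesimal rigidity to global distance preservation. Given $x_1,x_2\in\set(T)$, I would consider the segment $\gamma\subset B_1$ from $\psi(x_1)$ to $\psi(x_2)$ and slice $T$ by the orthogonal projection $\pi\circ\psi$ onto the hyperplane perpendicular to $\gamma$; for a.e.\ transverse parameter the resulting $1$-dimensional integral current in $X$ has boundary formed by $x_1$ and $x_2$ (with opposite signs), and by the pointwise Jacobian and multiplicity identities its mass equals $|\psi(x_1)-\psi(x_2)|$. Decomposing this $1$-current as a Lipschitz curve in $X$ yields $d(x_1,x_2)\le|\psi(x_1)-\psi(x_2)|$, which combined with the $1$-Lipschitz bound on $\psi$ delivers equality. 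Propagating this identity by density of slicing-compatible pairs and continuity of $d$ and $\psi$, and exploiting the injectivity of $\psi$ on $\set(\partial T)$ with image in $\partial B_1$ together with the completeness of integral current spaces, one obtains a surjective isometry $(X,d)\to(B_1,d_{\mathrm{Euc}})$. The identification $(X,d,T)=(B_1,d_{\mathrm{Euc}},\curr{B_1})$ then follows at once from the push-forward condition $\psi_\sharp T=\curr{B_1}$.
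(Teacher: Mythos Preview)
Your overall strategy is close to the paper's, and the first half---deriving $\theta\equiv 1$ and essential injectivity of $\psi|_E$ from equality in the area formula---is essentially the paper's Lemma~\ref{lemma-essentialInj}, modulo a technical slip: in a metric space $\Mbf(T)=\int_E\lambda\,\theta\,d\H^n$, not $\int_E\theta\,d\H^n$, so the relevant pointwise inequality is $\mathbf{C}_n(d^E\psi_x)\le\lambda(x)$ rather than a bare ``$|J\psi|\le 1$''. This is fixable. The metric-differential paragraph (forcing $s_a$ to be Euclidean and $D(\psi\circ\phi)(a)\in O(n)$) plays no role in what follows and can be dropped; you never use it, and a Liouville-type argument is unavailable anyway since $X$ need not be connected.

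The genuine gap is in the slicing step. For $x_1,x_2\in\set(T)$ you take $v$ parallel to $\psi(x_2)-\psi(x_1)$ and slice by $p_v\circ\psi$. But
\[
\partial\langle T,\,p_v\circ\psi,\,z\rangle=\pm\langle\partial T,\,p_v\circ\psi,\,z\rangle
\]
is supported on $\set(\partial T)\cap(p_v\circ\psi)^{-1}(z)$, i.e.\ on the two preimages of the points where the \emph{full chord} $p_v^{-1}(z)\cap B_1$ meets $\partial B_1$---not at $x_1$ and $x_2$. The slice never has boundary $\delta_{x_2}-\delta_{x_1}$ unless $x_1,x_2\in\set(\partial T)$, and its mass is the length of the whole chord, not $|\psi(x_1)-\psi(x_2)|$. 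Moreover, the slicing conclusions hold only for $\H^{n-1}$-a.e.\ $z$, so you cannot simply evaluate at the single parameter $z=p_v(\psi(x_1))$; even for that $z$ there is no a priori reason that $x_1,x_2$ lie in $\set(T_z)$.

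The paper resolves both issues at once. It first shows (Lemma~\ref{lemma:isometry_on_slices}) that for each $v$ and $\H^{n-1}$-a.e.\ $z$ the slice $T_z$ is a single geodesic in $X$ on which $\psi$ is an isometry onto the chord $p_v^{-1}(z)\cap B_1$. Then, for arbitrary $x_1,x_2$, it replaces the single parameter by small neighborhoods: the sets $\psi\bigl(\set(T)\cap A\cap B(x_i,\delta)\bigr)$ have positive Lebesgue measure in $\R^n$, and a Lebesgue-point argument (Lemma~\ref{lemma:Lebesgue}) produces a direction $v$ along which their projections to $v^\perp$ overlap on a set of positive $\H^{n-1}$-measure. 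This guarantees a good slice passing $\delta$-close to both $x_1$ and $x_2$, yielding $\bigl|d(x_1,x_2)-|\psi(x_1)-\psi(x_2)|\bigr|\le 4\delta$, and one lets $\delta\to 0$. Your argument needs exactly this ``approximate slicing through prescribed interior points'' mechanism to close.
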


We note that assumption (3), or a similar one, is really necessary to rule out several counterexamples, see Example \ref{ex-giacomo}.

There are other rigidity results by Cecchini--Hanke--Schick \cite{CHS}, Besson--Courtois-Gallot \cite{BCG}, Li \cite{Li}, Li--Wang \cite{LiWang}, Connell--Dai--N\'{u}ñez-Zimbr\'on--Perales--Su\'arez-Serrato--Wei \cite{CDNPSW} of volume preserving $1$-Lipschitz functions defined on spaces with no boundary that satisfy lower curvature bounds, such as scalar curvature bounds, sectional curvature bounds, Alexandrov spaces, Ricci limits and RCD spaces, respectively.  Rigidity results for spaces with boundary have been obtained by Burago--Ivanov \cite{BIhyp,BIeuc} where boundary rigidity and minimal fillings are studied. See also the references within. Furthermore, a similar result to Theorem \ref{thm-rigidity} has been recently proven by Basso, Creutz and Soultanis \cite{BCS}, of which we became aware while in the final stages of the completion of this manuscript. There, condition (3) is replaced by 
$\Mbf( \partial T)=\Mbf(\partial \curr{B_1})$. They include interesting examples of the necessity of the hypotheses. Furthermore, in their work $B_1$ can be taken to be any convex set in Euclidean space.  After this, we realized that Theorem \ref{thm-rigidity} also holds for any convex set in Euclidean space.

Theorem \ref{thm-rigidity} directly implies a positive answer to the Problem above, and therefore the original proof of \cite[{Theorem 1.3}]{HLSa} is valid. See Section \ref{sec-applications} for a sketch of this proof. We note that there is a different and rigorous  proof of  \cite[{Theorem 1.3}]{HLSa} provided
by Huang, Lee and the second named author 
\cite[{Theorem 3.2}]{HLP} that does not rely on the existence of such $1$-Lipschitz function nor on solving the Problem above. The proof uses an intrinsic flat compactness result, \cite[{Theorem 3.4}]{HLP}, which is an easy corollary of \cite[{Theorem 4.2}]{AP} of Allen and the second named author. The latter extends the work of both of them and Sormani to manifolds with boundary. One can also find  a proof of \cite[{Theorem 1.3}]{HLSa} in \cite[{Section 7}]{AP} that applies \cite[{Theorem 4.2}]{AP}, under the added assumption that the manifolds are entire. To obtain a full proof of \cite[{Theorem 1.3}]{HLSa}, in \cite{HLP} the manifolds with non-empty inner boundary are extended to manifolds homeomerphic to balls in Euclidean space and the homeomorphisms are carefully constructed to ensure they are $C^1$, so that one can apply \cite[{Theorem 3.4}]{HLP}.

From Theorem \ref{thm-rigidity} we derive a stability property for the Plateau problem.

\begin{theorem}[Stability]\label{thm-stability}
Let $(X_j, d_j, T_j)$ be a sequence of $n$-dimensional integral current spaces that converge in the intrinsic flat sense to $(X,d,T)$.
Assume that a sequence of $1$-Lipschitz maps $\psi_j: X_j \to \R^{n+1}$ is given,  $\bigcup_{j=1}^{\infty} \psi_j(X_j)$ is contained in a compact set, 
and let $\psi: (X,d) \to \R^{n+1}$ be an Arzel\`{a}-Ascoli intrinsic flat limit of such $\psi_j's$. 
If 
\begin{enumerate}
 \item  $\psi_{\sharp} (\partial T)= \curr{\partial B_R \times \{0\}}$
    \item $\liminf_{j \to \infty}\Mbf(T_j) \leq \Mbf(\curr{B_R \times \{0\}})$
    \item  $\psi$ is injective in  $\set(\partial T)$ and $\psi(\partial T) \subset \partial B_R \times \{0\}$.
    \end{enumerate} 
Then, $(X,d,T)$ equals $(B_R,d_{Euc},\curr{B_R})$.
\end{theorem}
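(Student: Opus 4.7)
I would reduce Theorem~\ref{thm-stability} to the rigidity result (Theorem~\ref{thm-rigidity}), using its extension to balls of arbitrary radius $R$ (by rescaling, or as a special case of the convex-body extension noted in the introduction). Rather than try to identify $\psi_\sharp T$ with the horizontal disk $\curr{B_R \times \{0\}}$ in $\R^{n+1}$, the key trick is to apply rigidity to the composition $\pi \circ \psi : X \to \R^n$, where $\pi : \R^{n+1} \to \R^n$ is the $1$-Lipschitz orthogonal projection onto the first $n$ coordinates; it identifies $\R^n \times \{0\}$ with $\R^n$, and $\pi \circ \psi$ is $1$-Lipschitz.

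It remains to verify the three hypotheses of Theorem~\ref{thm-rigidity} for $\pi \circ \psi$. For hypothesis~(1), I would use assumption~(1) of the present theorem together with the constancy theorem:
\[
\partial\bigl[(\pi\circ\psi)_\sharp T\bigr] = \pi_\sharp\bigl(\psi_\sharp\partial T\bigr) = \pi_\sharp\curr{\partial B_R \times \{0\}} = \curr{\partial B_R},
\]
so $(\pi\circ\psi)_\sharp T - \curr{B_R}$ is a compactly supported integral $n$-cycle in $\R^n$ and therefore vanishes, giving $(\pi\circ\psi)_\sharp T = \curr{B_R}$. Hypothesis~(3) follows directly from assumption~(3) of the present theorem, since $\pi$ is injective on the horizontal hyperplane $\R^n \times \{0\}$: thus $\pi \circ \psi$ inherits the injectivity of $\psi$ on $\set(\partial T)$, with image contained in $\partial B_R$.

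The central step is the mass equality of hypothesis~(2). Since $\pi\circ\psi$ is $1$-Lipschitz, one always has $\Mbf((\pi\circ\psi)_\sharp T) \le \Mbf(T)$. For the reverse inequality, lower semicontinuity of mass under intrinsic flat convergence, combined with assumption~(2) of the present theorem, gives
\[
\omega_n R^n = \Mbf\bigl((\pi\circ\psi)_\sharp T\bigr) \le \Mbf(T) \le \liminf_{j\to\infty}\Mbf(T_j) \le \Mbf(\curr{B_R \times \{0\}}) = \omega_n R^n,
\]
so equality holds throughout, and in particular $\Mbf(T) = \Mbf((\pi\circ\psi)_\sharp T)$.

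With the three hypotheses in hand, Theorem~\ref{thm-rigidity} applied to $\pi\circ\psi$ yields directly that $(X, d, T) = (B_R, d_{Euc}, \curr{B_R})$, completing the proof. The only real obstacle I anticipate is spotting the correct reduction: composing with $\pi$ before applying rigidity sidesteps the much more delicate task of identifying $\psi_\sharp T$ with the flat disk as an $n$-current in $\R^{n+1}$, which would otherwise require an equality-case analysis of the area formula (horizontality of tangent planes plus absence of cancellation in fibers of $\pi$). Once the reduction is in place, the verification is essentially formal and relies only on standard properties of pushforwards of integral currents, the constancy theorem in $\R^n$, and lower semicontinuity of mass under intrinsic flat convergence.
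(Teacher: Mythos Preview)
Your proposal is correct and takes a genuinely different, more elementary route than the paper. The paper's proof first shows $\psi_\sharp T = \curr{B_R \times \{0\}}$ in $\R^{n+1}$ by arguing that $\Mbf(\psi_\sharp T)$ is squeezed between $\vol(B_R)$ (using that the flat disk is mass-minimizing for its boundary) and $\Mbf(T)\le\liminf\Mbf(T_j)\le\vol(B_R)$, and then invokes Almgren's theorem that the disk is the \emph{unique} mass minimizer to identify $\psi_\sharp T$; only afterwards is Theorem~\ref{thm-rigidity} applied. Your reduction via $\pi\circ\psi$ sidesteps both the Plateau minimality and Almgren's uniqueness entirely: the constancy theorem in $\R^n$ is enough to pin down $(\pi\circ\psi)_\sharp T=\curr{B_R}$, and the same mass chain then yields the equality $\Mbf(T)=\Mbf((\pi\circ\psi)_\sharp T)$ needed for rigidity. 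This is a real simplification, replacing a deep regularity/uniqueness input by an elementary fact about top-dimensional integral cycles. The only thing the paper's approach buys in addition is the identification of $\psi_\sharp T$ itself (not merely its projection) with the horizontal disk in $\R^{n+1}$, but that extra information is not required for the stated conclusion.
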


We remark that under the conditions of Theorem \ref{thm-stability} $(X,d)$ could have a different topology than each 
$(X_j, d_j)$ (see Remark \ref{ex-diffTop}). 
Additionally, Theorem \ref{thm-stability} is stronger 
 than \cite[{Theorem 4.2}]{AP} when the limit space is expected to be $(B_1, d_{Euc}, \curr{B_R})$. On the other hand \cite[Theorem 4.2]{AP} allows limit spaces to be different to $(B_R,d_{Euc},\curr{B_R})$. See Remark \ref{ex-APvsDNP} for more details.

Since Theorem \ref{thm-rigidity} also holds for any convex set in Euclidean space, Theorem \ref{thm-stability} also holds for convex sets and this implies that the stability with respect to the intrinsic flat distance of fundamental domains of $3$-dimensional tori with scalar curvature converging to zero, as originally proven (but unpublished) by Cabrera Pacheco, Ketterer and the second named author \cite[{Theorem 5.5}]{CKP-arxive}, holds. This was used to prove stability of $3$-dimensional graphical tori in \cite[{Theorem 1.4}]{CKP-arxive}.  We remark that there is a rigorous and published version of the latter \cite{CPKP}(c.f. \cite[{Theorem 1.4}]{CKP-arxive2}), with a proof in the same spirit as the proof of the stability of the positive mass theorem for entire manifolds that appears in \cite[{Section 7}]{AP}.

We now give a brief outline of the paper. In Section \ref{sec:prelim} we recall some preliminaries on currents and integral current spaces. In Section \ref{sec:rigidity} we prove the rigidity result of Theorem \ref{thm-rigidity}. In Section \ref{sec-examples} we provide an example showing that condition (3) in Theorem \ref{thm-rigidity} cannot be dropped and another one showing that in Theorem \ref{thm-stability} the topology of the sequence can be different to the limit space. Finally, in Section \ref{sec-applications} we prove Theorem \ref{thm-stability} and discuss as an application the stability of the positive mass theorem stated by Huang--Lee--Sormani.

\subsection*{Acknowledgements}

GDN received funding from the European Research Council (ERC) under the European Union's Horizon 2020 research and innovation programme, grant agreement No 757254 (SINGULARITY).
RP acknowledges support from CONACyT Ciencia de Frontera 2019  CF217392 grant.

\section{Metric currents and Integral current spaces}\label{sec:prelim}

We recall that Ambrosio and Kirchheim developed the theory of currents in complete metric spaces \cite{AK} and that Sormani--Wenger developed the notion of integral current spaces and intrinsic flat distance \cite{SW2}.
We assume the reader to be familiar with these works. Nonetheless, we
will briefly define some concepts and results needed in the proofs of our main theorems. 

We denote by $B=B(0,1)$ the open Euclidean unit ball in $\R^n$, and by $B_r$ or $B(r)$ the open ball of radius $r$. Given a complete metric space $X$, we denote by $\Irm_n(X)$ the space of all $n$-dimensional integral currents in $X$.

\subsection{Currents in Euclidean space}
Given an $\H^k$-rectifiable set $E \subset \R^n$, a simple unit $k$-vector field $\tau$ on $E$, and a multiplicity function $\theta:E\to \R$, we denote by $[E,\tau,\theta]$ the Euclidean $k$-current given by
\[
[E,\tau,\theta](\omega)=\int_E \langle\omega(x),\tau(x)\rangle \theta(x) d\H^k(x),
\]
for every $\omega$ smooth, compactly supported $k$-form in $\R^n$. We will denote for simplicity $[B,\tau_{std},1]$ (where $\tau_{std}=e_1\wedge\ldots\wedge e_n$ is the standard orientation of $\R^n$) by $\curr{B}$.
  
Given a Lipschitz map $f:\R^n\to\R^n$, and an $n$-current $T=[E,\tau_{std},\theta]$, 
we recall that the pushforward of $T$ by $f$, denoted by $f_\sharp T$, equals $[f(E),\tau_{std},\tilde\theta]$, where
\begin{equation}\label{eq:pushforward_formula}
    \tilde\theta(y)=\sum_{z\in f^{-1}(y)} \mathrm{sign}(\det(df|_{z})) \theta(z)\qquad\text{for $\H^n$-a.e. $y$}.
\end{equation}
It is a consequence of the area formula that the expression above is well-defined, since the cardinality of $f^{-1}(y)$ is finite for $\H^n$-a.e. $y$, and $f$ is differentiable at $\H^n$-a.e. point by Rademacher's theorem.

\subsection{Structure of rectifiable currents}

Given two complete metric spaces $X,Y$, a Lipschitz function $f:X\to Y$, and an $n$-current $T$ in $X$, the pushforward $f_\sharp T$ is a well-defined $n$-current in $Y$ \cite[Definition 2.4]{AK}, and we have $\Mbf(f_\sharp T)\leq \mathrm{Lip}(f)^n \Mbf(T)$, where $\Mbf(T)$ denotes the mass of the current $T$.

We have the following structure theorem for rectifiable currents in metric spaces, which we can also take as a definition.

\begin{theorem}[{\cite[Theorem 4.5]{AK}}]\label{thm:parametric_representation}
Every rectifiable $n$-current $T$ in a complete metric space $(X,d)$ can be represented as
\begin{equation}\label{eq:decomposition_rectifiable}
T= \sum_{i}(f_i)_{\sharp} \curr{\theta_i}\qquad\text{with}\quad \Mbf(T)=\sum_i\Mbf((f_i)_\sharp\curr{\theta_i})
\end{equation}
for a countable collection of bi-Lipischitz maps $f_i:K_i\to X$, with $K_i$ compact in $\R^n$, $f_i(K_i)$ pairwise disjoint and $\theta_i\in L^1(K_i;\R\setminus\{0\})$. If $T$ is integral, then $\theta_i$ are integer-valued.
\end{theorem}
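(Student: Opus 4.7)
The plan is to obtain the desired canonical decomposition by starting from an arbitrary representation of $T$ as a countable sum of push-forwards of Lipschitz multiplicities on Borel subsets of $\R^n$ — which is essentially the definition of a rectifiable $n$-current in the metric setting of Ambrosio--Kirchheim — and then successively refining that representation in three stages: first to make every map bi-Lipschitz, then to make the images pairwise disjoint, and finally to verify the mass additivity and integrality statements.

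For the first stage, start with $T = \sum_j (g_j)_\sharp \curr{\eta_j}$, where $g_j \colon A_j \subset \R^n \to X$ is Lipschitz, $A_j$ is Borel, and $\eta_j \in L^1(A_j)$. Kirchheim's metric Rademacher theorem gives a metric differential $\mathrm{md}_{g_j}(z)$ at $\H^n$-a.e.\ $z \in A_j$; the set where this metric differential is degenerate contributes zero mass to $(g_j)_\sharp \curr{\eta_j}$ by the metric area formula, so it can be discarded. On the complementary set, a Lusin/Whitney-type argument partitions $A_j$, up to an $\H^n$-null set, into countably many compact pieces $K_{j,\ell}$ on each of which $g_j$ is bi-Lipschitz with controlled constants (a quantitative stratification into level sets of the metric differential). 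Relabeling, we obtain $T = \sum_i (f_i)_\sharp \curr{\theta_i}$ with $f_i$ bi-Lipschitz on $K_i$ compact and $\theta_i \in L^1(K_i)$, without pairwise-disjoint images yet.

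To disjointify the images, set $E_1 := f_1(K_1)$ and $E_i := f_i(K_i) \setminus \bigcup_{\ell < i} f_\ell(K_\ell)$, and choose a compact $K'_i \subset f_i^{-1}(E_i)$ exhausting $f_i^{-1}(E_i)$ up to $\H^n$-negligible sets by inner regularity of $\L^n$. On the disjoint targets $f_i(K'_i)$ the multiplicities from different original pieces covering the same point must be algebraically combined, with signs determined by matching the orienting $n$-vectors coming from the metric differentials of the various $f_i$'s (this is where the bi-Lipschitz property is essential, so that the image supports a well-defined approximate tangent space $\H^n$-a.e.). The redefined $\theta'_i$ lies in $L^1(K'_i;\R)$, and discarding the zero set of $\theta'_i$ gives the required $L^1(K'_i; \R \setminus \{0\})$. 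Integrality is preserved because the combination of multiplicities at each point is a finite sum of integers. Once the images are pairwise disjoint, the mass measures $\|(f_i)_\sharp \curr{\theta'_i}\|$ are mutually singular, so the identity $\|T\| = \sum_i \|(f_i)_\sharp \curr{\theta'_i}\|$ (which holds because $\|T\|$ is concentrated on $\bigcup_i f_i(K'_i)$ up to null sets) yields $\Mbf(T) = \sum_i \Mbf((f_i)_\sharp \curr{\theta'_i})$ by countable additivity.

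The principal obstacle is the bi-Lipschitz reduction in the first stage: without Kirchheim's metric differentiation machinery there is no way to promote a Lipschitz parametrization to a bi-Lipschitz one, so this is where the rectifiable hypothesis is genuinely used and where the proof interfaces non-trivially with metric analysis. The orientation-matching needed in the disjointification step is a secondary but delicate point, requiring a measurable selection that identifies the simple unit $n$-vectors induced by different $f_i$'s on the overlap set; the remaining disjointification bookkeeping and the mass-additivity verification are then routine.
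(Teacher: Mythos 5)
The paper itself contains no proof of this statement: it is quoted verbatim from Ambrosio--Kirchheim \cite[Theorem 4.5]{AK}, with the remark that it ``can also be taken as a definition,'' so your proposal can only be compared against the argument in \cite{AK}.

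Measured that way, your plan has a genuine structural gap at its starting point. In \cite{AK} a rectifiable $n$-current is \emph{not} defined as a countable sum of Lipschitz push-forwards from Borel subsets of $\R^n$; it is defined (Definition 4.2 there) as a finite-mass current whose mass measure is concentrated on a countably $\H^n$-rectifiable set and vanishes on $\H^n$-negligible sets. The existence of \emph{some} parametric representation is therefore not available by definition --- it is essentially the content of the theorem you are proving, and your first stage assumes it. (If instead you adopt the parametric description as the definition, which the paper permits, then the equivalence with the measure-theoretic definition is exactly what the citation is covering, and your argument proves only the refinement half.) The proof in \cite{AK} runs in the opposite order and avoids this: starting from the concentration set $S$, Kirchheim's decomposition theorem writes $S$, up to an $\H^n$-null set (which carries no $\|T\|$-mass by the absolute-continuity part of the definition), as a union of \emph{pairwise disjoint} compact bi-Lipschitz images $S_i=f_i(K_i)$; then $T\restrict S_i$ is pulled back under a Lipschitz extension of $f_i^{-1}$ and identified with a current $\curr{\theta_i}$ by the representation theorem for finite-mass top-dimensional currents in $\R^n$, with integrality of $\theta_i$ coming directly from the definition of integer rectifiability. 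Disjointness is thus arranged at the level of sets \emph{before} any parametrization, so no multiplicities ever need to be merged.

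This matters because your disjointification stage, where the merging happens, is precisely the step you leave heuristic. ``Matching the orienting $n$-vectors coming from the metric differentials'' is not well defined: a general metric target carries no canonical orientation on the image. The rigorous version must go through the Euclidean transition maps $f_j^{-1}\circ f_i$, bi-Lipschitz between subsets of $\R^n$, with signs $\mathrm{sign}\det d(f_j^{-1}\circ f_i)$ supplied by Rademacher's theorem. If you keep your order of operations you also owe: (i) a justification that restriction to $E_i$ commutes with the countable defining sum, which requires convergence of the partial sums in mass norm, not merely weakly; (ii) the observation that at $\H^n$-a.e.\ point only finitely many pieces contribute (integrability of $\sum_j|\theta_j|$ plus the fact that nonzero integers have modulus at least $1$), without which ``a finite sum of integers'' is unjustified; and (iii) countably many compact sets $K'_{i,m}$ rather than a single $K'_i$, since one compact set cannot exhaust a Borel set up to $\H^n$-null sets. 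Your stage-one reductions (discarding the degenerate set of the metric differential via the area formula, and the Lusin--Whitney bi-Lipschitz decomposition via Kirchheim's differentiation theorem) are correct in substance and are the same ingredients \cite{AK} uses, but as written the proposal both presupposes its hardest input and leaves its most delicate step at the level of a gesture.
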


Given a Lipschitz curve $\gamma:[0,1]\to X$, with $(X,d)$ a complete metric space, we denote by $\curr{\gamma}$ the associated integral $1$-current with weight equal to 1. In the case of integral 1-currents we can say more, as we have the following structure result, proved by Bonicatto--Del Nin--Pasqualetto.

\begin{theorem}[Decomposition of integral $1$-currents {\cite[Theorem 5.3]{BDP}}]\label{thm:decomposition}
Every integral $1$-current $T$ in a complete metric space $(X,d)$ can be written as $T=\sum_i\curr{\gamma_i}$, where $\gamma_i$ are either injective Lipschitz curves or injective Lipschitz loops, and so that
\[
\Mbf(T)=\sum_i \Mbf(\curr{\gamma_i}),\qquad \Mbf(\partial T)=\sum_i \Mbf(\partial\curr{\gamma_i}).
\]
\end{theorem}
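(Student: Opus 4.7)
\emph{Proof proposal.} The plan is to argue in three stages: first produce a continuous superposition representation of $T$ over a space of curves, then discretize it using the integrality assumption, and finally perform surgery to obtain injectivity.

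\textbf{Stage 1 (Superposition).} Since every integral $1$-current is a normal $1$-current, I would invoke a metric version of Smirnov's decomposition (in the spirit of Paolini--Stepanov) to write
\[
T = \int_{\Gamma} \curr{\gamma}\, d\mu(\gamma),
\]
where $\Gamma$ is a suitable space of constant-speed Lipschitz curves in $X$, $\mu$ is a finite positive Borel measure on $\Gamma$, and the mass identities
\[
\Mbf(T)=\int_{\Gamma} \Mbf(\curr{\gamma})\, d\mu(\gamma),\qquad \Mbf(\partial T)=\int_{\Gamma} \Mbf(\partial \curr{\gamma})\, d\mu(\gamma)
\]
hold. This is the core analytic input, but so far only as a continuous superposition.

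\textbf{Stage 2 (Discretization via integrality).} Here the integrality of $T$ plays the decisive role: the rectifiable measure $\|T\|$ has integer multiplicity $\H^1$-almost everywhere, and this rigidity must be reflected in the decomposition. Following the approach of \cite{BDP}, I would select a curve $\gamma_0$ in $\mathrm{supp}(\mu)$, extract the largest integer multiple of $\curr{\gamma_0}$ compatible with the remainder being an integral current of the correct orientation, and iterate. At each extraction the remaining mass drops by a definite amount; a Zorn/transfinite induction argument combined with the finiteness of $\Mbf(T)$ and $\Mbf(\partial T)$ then produces a countable family $\{\gamma_i\}$ with
\[
T = \sum_i \curr{\gamma_i}, \qquad \Mbf(T) = \sum_i \Mbf(\curr{\gamma_i}), \qquad \Mbf(\partial T) = \sum_i \Mbf(\partial \curr{\gamma_i}).
\]

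\textbf{Stage 3 (Surgery for injectivity).} Each $\gamma_i$ produced above may still have self-intersections. If $\gamma_i(s)=\gamma_i(t)$ for some $s<t$, I would cut $\gamma_i$ into the loop $\gamma_i|_{[s,t]}$ and the curve obtained by concatenating $\gamma_i|_{[0,s]}$ with $\gamma_i|_{[t,1]}$. As $1$-currents, $\curr{\gamma_i}$ decomposes additively into these two pieces, and because the loop has zero boundary both the total mass and the boundary mass stay additive. A maximality argument on the natural partial order of such surgeries yields a final family made only of injective Lipschitz paths and injective Lipschitz loops, which gives the claimed decomposition.

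The main difficulty is Stage 2: the measure $\mu$ is typically far from purely atomic, and distinct curves in $\mathrm{supp}(\mu)$ may overlap on sets of positive $\H^1$-measure, so one cannot simply split $\mu$ into Dirac masses. The crucial technical input from \cite{BDP} is a maximal extraction scheme that uses the local integrality of the density of $\|T\|$ to consistently assemble overlapping pieces of different curves into countably many integer-weight curves while simultaneously maintaining the boundary-mass identity; this is the step where metric subtleties (absence of an ambient vector structure, no a priori rectifiable parametrization of the underlying set) make matters more delicate than in the Euclidean case.
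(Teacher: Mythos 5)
The paper offers no proof of this statement: it is imported verbatim from \cite[Theorem 5.3]{BDP}, so your proposal can only be measured against the argument in that reference. Your three-stage outline (superposition, discretization via integrality, surgery) is broadly consistent with the strategy of the literature, but as written it has genuine gaps. The first is in Stage 1: the metric Smirnov-type superposition of Paolini--Stepanov, with \emph{both} identities $\Mbf(T)=\int_\Gamma \Mbf(\curr{\gamma})\,d\mu$ and $\Mbf(\partial T)=\int_\Gamma \Mbf(\partial\curr{\gamma})\,d\mu$, is available for \emph{acyclic} normal $1$-currents. A general normal $1$-current must first be split into an acyclic part plus a cycle (itself a nontrivial step if one wants mass additivity), and the cyclic part of a merely normal current need \emph{not} be a superposition of Lipschitz loops at all: in Smirnov's Euclidean theorem the cycles decompose into elementary solenoids, and the normal cycle generated by an irrational winding on the torus shows that loops genuinely do not suffice. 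Hence integrality must already enter in Stage 1, to decompose the cyclic part into honest loops, and not only in Stage 2 as your plan asserts.

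Stage 2, which you rightly identify as the crux, is not proved but delegated: selecting ``the largest integer multiple of $\curr{\gamma_0}$'' is not a meaningful operation when $\mu$ is non-atomic, since an individual curve in $\mathrm{supp}(\mu)$ typically carries zero $\mu$-measure, and you never establish the subcurrent property $\Mbf(T-\curr{\gamma_0})=\Mbf(T)-\Mbf(\curr{\gamma_0})$ (and its boundary analogue) for the curve you extract --- without it the iteration destroys the mass identities rather than preserving them. Invoking ``the maximal extraction scheme of \cite{BDP}'' at exactly this point assumes the content of the theorem being proved. There is also a flaw in Stage 3: cutting $\gamma_i$ at a self-intersection preserves $T=\sum_i\curr{\gamma_i}$ and the boundary-mass identity (the loop is closed), but \emph{not} automatically the mass identity. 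Mass is only subadditive under the splitting, and if the loop $\gamma_i|_{[s,t]}$ retraces part of the complementary arc with opposite orientation, then $\Mbf(\curr{\gamma_i})$ is strictly smaller than the sum of the masses of the two pieces, so after surgery $\sum_i\Mbf(\curr{\gamma_i})$ strictly exceeds $\Mbf(T)$. One must first guarantee absence of cancellation along and between the curves; in \cite{BDP} injectivity is instead extracted from the structure of indecomposable components rather than by a posteriori cutting. In short: the roadmap is sensible, but the decisive steps (the loop decomposition of the cyclic part, the extraction scheme with the subcurrent property, and a no-cancellation argument for the surgery) are missing or borrowed from the very result in question.
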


\subsection{Area factor} For a rectifiable $n$-current $[E,\tau,\theta]$ in a Euclidean space we have the following formula for its total mass:
\[
\Mbf(T)=\int_E |\theta| d\H^n.
\]
However, for a rectifiable current in a metric space an extra factor appears, the so-called area factor $\lambda$, which we will define now. We remark that the following material is taken from Ambrosio--Kirchheim \cite{AK}.

If $V$ is a Banach space of dimension $n$, the area factor of $V$ is defined to be 
\begin{align*}
\lambda_V= \tfrac{2^n} {\omega_n}   \sup\Big\{ \tfrac{  \H^n(B_1)} {\H^n (L(C)) } \, \vert \, & B_1 \subset L(C) \subset V, \, L:  \mathbb R^n \to V \, \text{ linear}\\
&
\text{and $C$ a Euclidean cube} \Big\},
\end{align*}
where $\omega_n$ is the $n$-Lebesgue measure of a ball of radius $1$ in the $n$-dimensional Euclidean space.

Consider now a $w^*$-separable Banach space $Z$ and a countably $\H^n$-rectifiable
subset $E$ of $Z$, i.e. assume that 
\begin{equation*}
\H^n \left( E \backslash \bigcup_{i=1}^\infty f_i(A_i) \right) = 0
\end{equation*}
with $f_i: A_i \subset \R^n \to Z$ Lipschitz functions, $A_i$ Borel sets, $i \in \mathbb N$. The approximate tangent space to $E$ at a point $x$ is defined as 
\begin{equation*}
\Tan^n(E,x) = wd_y f_i(\R^n),
\end{equation*} 
for $y \in A_i$ that satisfies $f_i(y)=x$, $f_i$ metrically and $w^*$-differentiable at $y$, and $J_n(wd_y f_i) > 0$. 
By \cite{AK}, $\Tan^n(E,x)$ is well defined for $\H^n$-almost all $x \in E$.  Then one defines the area factor of $E$, $\lambda: E \to \R$, as the area factor of $\Tan^n(E, \cdot)$.

Given an arbitrary separable, countably $\H^n$-rectifiable subset $E$ of a metric space $X$, we may isometrically embed $E$ into a $w^*$-separable Banach space $Z$, $\iota:  E \to Z$, then for $\H^n$-almost every $x\in E$
the approximate tangent space of $E$ at $x$ can be defined as
\begin{equation*}
\Tan^n(E,x) : = \Tan^n(\iota(E),\iota(x)).
\end{equation*}
We note that $\Tan^n(E,x)$ is uniquely determined $\H^n$-a.e. up to linear isometries. Then, the area factor of $E$, $\lambda: E \to \R$, 
given by $\lambda(x)=\lambda(\iota(x))$ is well-defined $\H^n$-a.e. We remark that, for a $1$-dimensional current, the area factor is always 1.

Now recall that for an $n$-dimensional current, $T$, the canonical set of $T$ is defined as 
\[
\set(T):= \left\{x \in X \; : \; \liminf_{r \to 0} \frac{\|T\|(B(x,r))}{r^n} > 0\right\}.
\]
This is the smallest set among Borel sets in $X$ in which $T$ is concentrated, up to $\H^{n}$-negligible sets \cite[{Theorem 4.6}]{AK}.

We have the following representation of the mass measure.

\begin{theorem}[{\cite[Theorem 9.5]{AK}}]
For a rectifiable $n$-current $T$ in a complete metric space $(X,d)$ the mass can be represented as
\begin{equation}\label{eq:mass_formula}
\|T\|=\lambda\theta\H^n\restrict E,\qquad \Mbf(T)=\int_E \lambda(x)|\theta(x)|d\H^n(x),
\end{equation}
for some $\H^n$-rectifiable set $E$, some Borel $\H^n$-integrable real multiplicity $\theta:E\to \mathbb{R} \setminus \{0\}$, and where $\lambda$ is the area factor. If $T$ is an integral current then $\theta$ assumes integer values. Moreover, writing $T$ as in \eqref{eq:decomposition_rectifiable}, we have
\begin{equation}\label{eq:theta_formula}
\theta(x)=\theta_i(f_i^{-1}(x))\qquad\text{for $\|T\|$-a.e. $x\in f_i(K_i)$},
\end{equation}
and “$\,\|T\|$-a.e.” can be equivalently replaced with “$\,\H^n\restrict \set(T)$-a.e.”.
\end{theorem}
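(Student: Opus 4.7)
The plan is to reduce everything to a single bi-Lipschitz chart, compute the mass measure on that chart via Kirchheim's metric area formula, and then reassemble. The area factor $\lambda$ will arise precisely as the comparison between the metric Jacobian and the Euclidean one.

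First, I would invoke the parametric representation from Theorem \ref{thm:parametric_representation}: write $T=\sum_i (f_i)_\sharp \curr{\theta_i}$ with $f_i:K_i\to X$ bi-Lipschitz, $K_i\subset\R^n$ compact, $f_i(K_i)$ pairwise disjoint, and $\Mbf(T)=\sum_i \Mbf((f_i)_\sharp\curr{\theta_i})$. The disjointness of images and the additivity of the mass mean that the mass measures of the pieces add up without cancellation, so it suffices to prove the representation for one bi-Lipschitz piece $(f_i)_\sharp\curr{\theta_i}$ and then sum. Define $E=\bigcup_i f_i(K_i)$ and $\theta(x)=\theta_i(f_i^{-1}(x))$ for $x\in f_i(K_i)$; this is well-defined by the disjointness of images, and takes integer values in the integral case since each $\theta_i$ is then integer-valued.

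The main step is to show that on a single chart $f_i(K_i)$ the mass measure of $(f_i)_\sharp\curr{\theta_i}$ equals $\lambda(x)|\theta(x)|\H^n\restrict f_i(K_i)$. I would use Kirchheim's metric differentiability: at $\H^n$-a.e.\ $y\in K_i$ there is a seminorm-valued metric differential $md_yf_i$, and an associated metric Jacobian controlling the distortion of $\H^n$. The corresponding tangent structure can be isometrically embedded into a $w^\ast$-separable Banach space, where the approximate tangent space $\Tan^n(f_i(K_i),f_i(y))$ lives. By Kirchheim's area formula applied to $f_i$, the pushed-forward mass measure has density equal to $|\theta_i\circ f_i^{-1}|$ times the metric Jacobian. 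Comparing the $\H^n$-volume of a unit ball in this Banach tangent space with the Euclidean unit ball gives exactly the area factor $\lambda$ by its definition, so the metric Jacobian factors as $\lambda(f_i(y))$ times the Euclidean Jacobian of an isometric linearization; after the change of variables $x=f_i(y)$ the Euclidean Jacobian cancels against the density of $\H^n\restrict f_i(K_i)$, leaving the clean formula $\|(f_i)_\sharp\curr{\theta_i}\|=\lambda|\theta_i\circ f_i^{-1}|\H^n\restrict f_i(K_i)$. Summing over $i$ yields \eqref{eq:mass_formula}, and integrating gives the mass expression.

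Finally, I would address the equivalence between "$\|T\|$-a.e." and "$\H^n\restrict\set(T)$-a.e." assertions. Since $\lambda\geq 1$ (the unit Euclidean ball is always contained in the image of a suitable cube under a linear map, making the sup in the definition of $\lambda$ at least $1$) and $\theta\neq 0$ on $E$, the sets where $\|T\|$ has positive density and where $E$ has positive $\H^n$-density coincide up to $\H^n$-null sets. Conversely, $\set(T)$ is by definition the support of positive lower $n$-density for $\|T\|$, and a density argument (as in \cite{AK}) shows $\set(T)=E$ modulo $\H^n$-null sets, hence the formula \eqref{eq:theta_formula} is equally valid with respect to either measure.

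The principal obstacle is the metric area formula step: matching Kirchheim's metric Jacobian with the product of the area factor $\lambda$ and a "normalized" Jacobian requires careful tangent-space identification, particularly the fact that the area factor of a Banach tangent space captures exactly the ratio between its intrinsic $\H^n$-volume form and the Euclidean one via arbitrary linear parametrizations. The remainder is then an orchestration of already-established results, but this identification is the conceptual heart of the theorem.
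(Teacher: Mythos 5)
First, a point of calibration: the paper does not prove this statement at all --- it is quoted directly from Ambrosio--Kirchheim as \cite[Theorem 9.5]{AK} --- so your proposal can only be measured against the argument in \cite{AK} itself. Your outer scaffolding is fine: reducing via the parametric representation \eqref{eq:decomposition_rectifiable}, using disjointness of the $f_i(K_i)$ together with $\Mbf(T)=\sum_i\Mbf((f_i)_\sharp\curr{\theta_i})$ to add the mass measures without cancellation, and defining $\theta(x)=\theta_i(f_i^{-1}(x))$ so that \eqref{eq:theta_formula} holds by construction (with integer values in the integral case). The genuine gap is in the single-chart step, and it is a circularity rather than a technicality. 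You assert that ``by Kirchheim's area formula the pushed-forward mass measure has density equal to $|\theta_i\circ f_i^{-1}|$ times the metric Jacobian.'' But Kirchheim's area formula relates Lebesgue measure on $K_i$ to the Hausdorff measure $\H^n$ of the image; it says nothing about the Ambrosio--Kirchheim mass, which is defined by duality against tuples $(g,\pi_1,\dots,\pi_n)$ of Lipschitz functions. The entire content of the theorem is precisely that this duality-defined mass has density $\lambda|\theta|$ with respect to $\H^n\restrict E$, so invoking the area formula to produce the mass density assumes what is to be proved. Likewise, your claim that ``the metric Jacobian factors as $\lambda(f_i(y))$ times the Euclidean Jacobian of an isometric linearization'' is not a correct identity: Kirchheim's Jacobian is normalized exactly so that the area formula holds for $\H^n$ and contains no $\lambda$; the factor $\lambda$ enters only when converting $\H^n$ into mass. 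What \cite{AK} actually does at this point is two-sided: an upper bound $\|T\|\le\lambda|\theta|\H^n\restrict E$, obtained by testing $T$ against Lipschitz tuples and bounding the Jacobian of the tangential differential of any $1$-Lipschitz map into $\R^n$ by $\lambda$ (the inequality recorded in the paper as \eqref{eq:lambda_versus_Jacobian}, from \cite[Lemma 9.2]{AK}); and a matching lower bound, which requires that the supremum in that lemma be \emph{attained} by some linear $L:V\to\R^n$ with $\Lip(L)\le 1$ on the tangent Banach space $V$, followed by a measurable selection of near-optimal tuples over the points of $E$ and a blow-up/localization argument. None of this appears in your sketch; indeed your closing paragraph explicitly flags this identification as unresolved, which is an admission that the conceptual heart of the theorem is missing rather than an argument for it.

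A secondary flaw: your justification of the equivalence of ``$\|T\|$-a.e.'' and ``$\H^n\restrict\set(T)$-a.e.'' leans on the claim $\lambda\ge 1$, and your parenthetical does not establish it --- exhibiting some cube with $B_1\subset L(C)$ only shows the supremum is positive, and the known general bounds in \cite{AK} are $n^{-n/2}\le\lambda\le 2^n/\omega_n$, which allow $\lambda<1$ for non-Euclidean tangent norms. Fortunately the equivalence needs only $\lambda>0$ together with $\theta\ne 0$ on $E$ and the fact, quoted in the paper from \cite[Theorem 4.6]{AK}, that $\set(T)$ coincides with $E$ up to $\H^n$-negligible sets; with that substitution this part of your argument is sound, but as written the stated reason is wrong.
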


\subsection{Coarea formula}
A similar reasoning as in the previous discussion, that is, considering an embedding into a $w^*$-separable Banach space, allows to define the tangential differential $d^E\psi_x$ of a Lipschitz function $\psi:E\to\R^n$, with $E$ an $\H^n$-rectifiable set in a metric space $(X,d)$. From this, one also has the following version of the Coarea formula \cite[Theorem 9.4]{AK2}: for a Lipschitz map $\psi:X\to \R^n$, and an $\H^n$-rectifiable set $E\subset X$
\begin{equation}\label{eq:coarea_formula}
\int_E \theta(x) \mathbf{C_n}(d^E \psi_x)d\H^n(x)=\int_{\R^n}\bigg(\sum_{\psi{-1}(y)} \theta(x) \bigg)d\H^n(y).
\end{equation}
Here $\mathbf{C}_n(d^E\psi_x)$ is the coarea factor of the tangential differential of $f$ on $E$ \cite[Section 9]{AK2}. We will not give the precise definition of $\mathbf{C}_n$, since we will only use the following inequality: for a $1$-Lipschitz map $\psi:X\to\R^n$ and an $\H^n$-rectifiable set $E\subset X$ we have 
\begin{equation}\label{eq:lambda_versus_Jacobian}
\mathbf{C}_n (d^E\psi_x) \leq \lambda(x)\qquad\text{for $\H^n$-a.e. $x\in X$.}
\end{equation}
This inequality is proven in \cite[Lemma 9.2]{AK} for the Jacobian $\mathbf{J}_n(d^E\psi_x)$, but the version above follows if one observes that $\mathbf{J}_n(d^E\psi_x)=\mathbf{C}_n(d^E\psi_x)$ for an $n$-dimensional set $E$ and a function $\psi$ with values in $\R^n$ (see the discussion above \cite[Eq. (9.2)]{AK2}).

\subsection{Slicing}
We summarise below some properties of the slicing operator, adapted to our situation.

\begin{proposition}
[{\cite[Theorems 5.6, Theorem 5.7 and c.f. proof of Lemma 5.9]{AK}}]\label{prop:slicing}
Let $T\in\Irm_n(X)$, and let $\pi\in \Lip(X,\R^{n-1})$. Then for $z \in  \R^{n-1}$
there exist currents $\langle T,\pi,z\rangle\in\Irm_{1}(X)$ (called slices), satisfying:
\begin{enumerate} 
\item $\langle T,\pi,z\rangle$ is concentrated on $\set(T)\cap\pi^{-1}(z)$.
    \item The following identity between measures holds:
    \[
    \int_{\R^{n-1}} \|\langle T,\pi,z\rangle \|dz=\|T\restrict d\pi\|,
    \]
    and in particular the following inequality holds:
    \[
    \int_{\R^{n-1}} \Mbf(\langle T,\pi,z\rangle)dz\leq \Lip(\pi)^{n-1}\Mbf(T).
    \]
    \item   $\partial \langle T,\pi,z\rangle =(-1)^{n-1} \langle \partial T,\pi,z\rangle$ for $\H^{n-1}$-a.e. $z$.
    \item   Let $\psi:X\to\R^n$ and $p:\R^n\to\R^{n-1}$ be a pair of Lipschitz maps, then
            \[
            \psi_\sharp \langle T, p \circ \psi,z\rangle=\langle \psi_\sharp T,p,z\rangle\qquad\text{for $\H^{n-1}$-a.e. $z\in\R^{n-1}$.}
            \]
\end{enumerate}
\end{proposition}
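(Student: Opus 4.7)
The plan is to define the slices $\langle T,\pi,z\rangle$ via a mollification scheme and then verify properties (1)--(4). The guiding principle is the disintegration identity: for any $f\in C_c(\R^{n-1})$, the $1$-current $T\restrict(f\circ\pi\,d\pi)$ should coincide with $\int_{\R^{n-1}} f(z)\,\langle T,\pi,z\rangle\,dz$. Concretely one sets
\[
\langle T,\pi,z\rangle := \lim_{r\to 0} T\restrict\bigl(\rho_r(\pi-z)\, d\pi\bigr),
\]
where $\rho_r$ is a standard mollifier on $\R^{n-1}$, and shows that the limit exists as an element of $\Irm_1(X)$ for $\H^{n-1}$-a.e. $z$.

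For existence I would invoke the parametric representation, Theorem~\ref{thm:parametric_representation}, to write $T=\sum_i (f_i)_\sharp\curr{\theta_i}$ with $f_i:K_i\to X$ bi-Lipschitz onto disjoint images and $\theta_i$ integer-valued. Since pushforward commutes with restriction in the appropriate sense, slicing $T$ reduces to slicing each integer rectifiable Euclidean current $\curr{\theta_i}$ by the Lipschitz map $\pi\circ f_i:K_i\to\R^{n-1}$. Classical Federer-style Euclidean slicing then provides integer rectifiable $1$-currents for $\H^{n-1}$-a.e. $z$, together with the disintegration $\int_{\R^{n-1}}\Mbf(\langle\curr{\theta_i},\pi\circ f_i,z\rangle)\,dz\leq\Lip(\pi\circ f_i)^{n-1}\Mbf(\curr{\theta_i})$. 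Pushing forward by $f_i$ and summing over $i$ (using the disjointness of the images to avoid interference of supports) yields $\langle T,\pi,z\rangle\in\Irm_1(X)$ together with the mass identity and bound in (2).

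With the definition in hand, the remaining properties fall out as follows. Property (1) is immediate: $\rho_r(\pi-z)$ concentrates on $\pi^{-1}(z)$ as $r\to 0$, and $T$ itself is concentrated on $\set(T)$ by Theorem~4.6 of \cite{AK}. Property (2) is precisely the Fubini-type disintegration of $\|T\restrict d\pi\|$, and the mass inequality follows from $\|T\restrict d\pi\|\leq\Lip(\pi)^{n-1}\|T\|$. Property (3) is proved by applying the Leibniz rule for boundaries of restrictions to $\omega_r(z):=\rho_r(\pi-z)\,d\pi$: since the one-forms $d\pi_j$ already appear in $\omega_r(z)$, the exterior differential $d\omega_r(z)$ vanishes, so $\partial\bigl(T\restrict\omega_r(z)\bigr)=(-1)^{n-1}(\partial T)\restrict\omega_r(z)$, and passing to the limit $r\to 0$ gives (3). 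For (4), the chain rule $d(p\circ\psi)=\psi^{\ast}(dp)$ together with the identity $\psi_\sharp\bigl(T\restrict(g\circ\psi)\,d(p\circ\psi)\bigr)=(\psi_\sharp T)\restrict g\,dp$ applied with $g=\rho_r(\cdot-z)$ yields compatibility at the approximate level, from which the limit form of (4) follows.

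The main obstacle is the existence step: one must verify that the mollified currents actually converge for $\H^{n-1}$-a.e. $z$, that the limit is a genuine \emph{integral} $1$-current rather than merely a rectifiable one of finite mass, and that the disintegration of the mass measure $\|T\restrict d\pi\|$ holds pointwise. All of this is carried out in detail in Section~5 of Ambrosio--Kirchheim \cite{AK}, so in practice the proof of the proposition reduces to a careful quotation of \cite[Theorems~5.6 and 5.7]{AK} together with the compatibility statement contained in the proof of \cite[Lemma~5.9]{AK}.
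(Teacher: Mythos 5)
Your proposal is correct and lands in the same place as the paper: the paper offers no proof of Proposition \ref{prop:slicing} at all, presenting it purely as a quotation of Theorems 5.6 and 5.7 and the proof of Lemma 5.9 of Ambrosio--Kirchheim \cite{AK}, which is exactly what your final paragraph reduces to. Your intermediate sketch (mollified restrictions, reduction to Euclidean slicing via the parametric representation, the Leibniz rule together with the alternation property of metric currents to get the sign $(-1)^{n-1}$ in (3), and the pushforward--restriction compatibility $\psi_\sharp\bigl(T\restrict \psi^{\#}\omega\bigr)=(\psi_\sharp T)\restrict\omega$ for (4)) is a faithful outline of the machinery behind those citations, modulo the harmless notational slip that in (4) the mollifier should be composed with $p\circ\psi$ on the left and with $p$ on the right.
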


\subsection{Integral current spaces}

An integral current space, $(X,d,T)$, consists of a metric space, $(X,d)$, with a current, $T \in \Irm_n(\bar X)$, where $\bar X$ is the completion of $X$, and so that $\set(T)=X$. 

Given an oriented compact Riemannian manifold, $(M,g)$, we associate to it the integral current space $(M, d_g, \curr{M})$, where $d_g$ is the length distance induced by $g$ and $\curr{M}$ is the integral current given by a bi-Lipschitz oriented and countable atlas of $M$, and choosing weight $\theta=1$ (c.f. \cite[{Remarks 2.8, 2.38}]{SW2}).  We often abuse notation and write $\curr{M}$ or even $M$ instead of the whole triple. 

The intrinsic flat distance is defined in the class
of precompact integral current spaces of the same dimension up to current preserving isometry, that is, 
\[
d_{\mathcal F}((X_1,d_1, T_1), (X_2,d_2, T_2))=0
\]
if and only if there exists an isometry $\psi:X_1 \to X_2$ such that $\psi_\sharp T_1=T_2$.

We have the following Arzel\`{a}-Ascoli type theorem for integral current spaces, due to Sormani.

\begin{theorem}[{\cite[Theorem 6.1]{S:AA}}]
Let $M_j=(X_j,d_j, T_j)$ be a sequence of integral current spaces, $j \in \mathbb N \cup \{ \infty\}$, such that $M_j$ converges in the intrinsic flat sense to $M_\infty$. Assume there exist $L$-Lipschitz functions $\psi_j: X_j \to W$, $j \in \mathbb N$, where $W$ is a compact metric space. Then there exists a subsequence, $\psi_{j_k}$, that converges to an $L$-Lipschitz function, $\psi_\infty: X_\infty \to W$. That is, there exist 
a subsequence $M_{j_k}$ and isometric embeddings into a complete metric space $Z$, $\varphi_{k}: X_{j_k} \to Z$, such that 
$\varphi_{k \sharp}(T_{j_k})$ converges in flat sense to $\varphi_{\infty \sharp}(T_{j_\infty})$, and $\psi_\infty$ is then given by
$$\psi_\infty(x_\infty)= \lim_{k \to \infty} \psi_{j_k}(x_k)$$
for any $x_\infty \in X_\infty$ and any sequence $x_{j_k}$ such that 
$\varphi_k(x_{j_k})$ converges to $\varphi_\infty(x_\infty)$. 
\end{theorem}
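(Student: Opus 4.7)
The plan is to lift everything into a common ambient space provided by intrinsic flat convergence, extend the domain of the Lipschitz maps to that space, and then invoke a classical Arzel\`a--Ascoli argument. First, by the definition of intrinsic flat convergence, after passing to a subsequence (not relabeled for the moment) there exist a complete metric space $(Z,d_Z)$ and isometric embeddings $\varphi_j:X_j\to Z$, $\varphi_\infty:X_\infty\to Z$, such that $\varphi_{j\sharp}T_j$ converges to $\varphi_{\infty\sharp}T_\infty$ in the Federer--Fleming flat sense in $Z$. Since integral current spaces in the intrinsic flat setting are precompact, the sets $K_j:=\varphi_j(X_j)$ and $K_\infty:=\varphi_\infty(X_\infty)$ are compact subsets of $Z$.

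Next, I would move the maps to a vector-valued setting so that Lipschitz extensions are available. Because $W$ is compact, embed it isometrically into a separable Banach space $V$ via the Kuratowski--Wojdys{\l}awski embedding $\iota:W\hookrightarrow V$ (for instance $V=C(W)$). The composition $\iota\circ\psi_j\circ\varphi_j^{-1}:K_j\to V$ is $L$-Lipschitz, and I would extend it to an $L$-Lipschitz map $\tilde\psi_j:Z\to V$ by a coordinatewise McShane--Whitney extension after viewing $V$ inside some $\ell^\infty(\Gamma)$, so that the $L$-Lipschitz constant is preserved.

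With the $\tilde\psi_j$ in hand, I would invoke Arzel\`a--Ascoli on the compact set $K_\infty$. The family is equi-$L$-Lipschitz by construction; for equi-boundedness, flat convergence of $\varphi_{j\sharp}T_j$ to $\varphi_{\infty\sharp}T_\infty$ in $Z$ yields, for every $z\in K_\infty=\set(\varphi_{\infty\sharp}T_\infty)$, a sequence $z_j\in K_j$ with $z_j\to z$: otherwise a small ball around $z$ would carry positive $\|\varphi_{\infty\sharp}T_\infty\|$-mass but asymptotically vanishing $\|\varphi_{j\sharp}T_j\|$-mass, contradicting lower semicontinuity along flat convergence at the level of a Lipschitz test function. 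The Lipschitz estimate $\|\tilde\psi_j(z)-\iota(\psi_j(\varphi_j^{-1}(z_j)))\|_V\le L\,d_Z(z,z_j)$ together with compactness of $\iota(W)$ gives uniform boundedness on $K_\infty$. Extract via a diagonal argument a subsequence $\tilde\psi_{j_k}$ converging uniformly on $K_\infty$ to an $L$-Lipschitz map $\tilde\psi_\infty:K_\infty\to V$. For each $z\in K_\infty$, combining the uniform convergence with the approximation $z_k\to z$ gives $\tilde\psi_{j_k}(z_k)\to\tilde\psi_\infty(z)$, and since $\tilde\psi_{j_k}(z_k)\in\iota(W)$ which is closed, $\tilde\psi_\infty(z)\in\iota(W)$. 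Thus $\psi_\infty:=\iota^{-1}\circ\tilde\psi_\infty\circ\varphi_\infty:X_\infty\to W$ is a well-defined $L$-Lipschitz map, and the characterization $\psi_\infty(x_\infty)=\lim_k\psi_{j_k}(x_{j_k})$ along any sequence with $\varphi_k(x_{j_k})\to\varphi_\infty(x_\infty)$ is immediate from uniform convergence plus the Lipschitz bound.

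The main obstacle is the equi-boundedness step, i.e.\ justifying that arbitrary points of $K_\infty$ admit approximating sequences in $K_j$ (or at least that $\tilde\psi_j$ remain uniformly bounded on $K_\infty$), since flat convergence is strictly weaker than Hausdorff convergence of supports and in principle mass could concentrate away from $K_\infty$. The remedy is to exploit that $K_\infty=\set(\varphi_{\infty\sharp}T_\infty)$, so every neighborhood of $z\in K_\infty$ carries definite limiting mass, which by flat convergence is inherited (up to $o(1)$) by $\varphi_{j\sharp}T_j$ for large $j$; this forces $K_j$ to meet every such neighborhood and delivers the approximating sequences needed to close the argument.
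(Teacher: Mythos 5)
This theorem is not proved in the paper at all: it is quoted verbatim from Sormani \cite{S:AA}, so the comparison is with her original argument. Your proof is essentially correct and its crux coincides with hers: the step you rightly single out as the ``main obstacle'' --- producing, for each $z\in\set(\varphi_{\infty\sharp}T_\infty)$, approximating points $z_j\in\varphi_j(X_j)$ --- is precisely Sormani's ``points in the limit are limits of points'' lemma, and your derivation of it is sound: if a ball $B(z,r)$ missed $\varphi_j(X_j)$ along a subsequence, then since $\|\varphi_{j\sharp}T_j\|$ is concentrated on $\varphi_j(X_j)$ one would get $T_j$-integrals of test pairs $(f,\pi)$ with $f$ supported in $B(z,r)$ tending to zero, while $z\in\set(\varphi_{\infty\sharp}T_\infty)$ forces some such test pair to evaluate nontrivially on the limit; flat convergence (which implies weak convergence against bounded Lipschitz tests) gives the contradiction. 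Where you genuinely diverge is the packaging: Sormani works directly, defining $\psi_\infty$ on a countable dense subset of $X_\infty$ by a diagonal argument using compactness of $W$, then extends by the equi-Lipschitz bound and checks independence of the approximating sequences; you instead embed $W$ isometrically in a Banach space, extend $\iota\circ\psi_j\circ\varphi_j^{-1}$ to all of $Z$ by McShane--Whitney in $\ell^\infty(\Gamma)$, and invoke classical Arzel\`a--Ascoli on the limit set. Your route buys a clean reduction to a textbook theorem; Sormani's avoids any extension machinery and makes the well-definedness of the limit map explicit. Both hinge on the same two ingredients: lower semicontinuity of localized mass under flat convergence, and compactness of $W$.

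Two points need minor repair, though neither is a fatal gap. First, Arzel\`a--Ascoli for maps into the infinite-dimensional space $\ell^\infty(\Gamma)$ requires \emph{pointwise relative compactness} of the values, not mere equi-boundedness as you state: bounded sets in $\ell^\infty(\Gamma)$ are far from precompact. Fortunately your own displayed estimate $\|\tilde\psi_j(z)-\iota(\psi_j(\varphi_j^{-1}(z_j)))\|\le L\,d_Z(z,z_j)\to 0$ shows $\mathrm{dist}(\tilde\psi_j(z),\iota(W))\to 0$ for each $z$, and a sequence whose distance to a fixed compact set tends to zero is sequentially precompact; so the needed hypothesis is already implicit in your argument and you should simply invoke it as precompactness rather than boundedness. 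Second, $\varphi_j(X_j)$ and $\varphi_\infty(X_\infty)$ are in general only \emph{precompact} (integral current spaces have $X=\set(T)$, which need not be closed --- e.g.\ an open ball), so run the Arzel\`a--Ascoli and density arguments on the compact closures; also note that by Sormani--Wenger a single common space $Z$ with embeddings for the \emph{whole} sequence exists, so your initial passage to a subsequence there is unnecessary (and harmless, since the theorem only claims subsequential convergence).
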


We will say that $\psi_\infty$ is an Arzel\`{a}-Ascoli intrinsic flat limit of $\psi_j$.

\section{Rigidity of mass-preserving 1-Lipschitz maps}\label{sec:rigidity}

We start this section by giving the proof of the rigidity statement of Theorem \ref{thm-rigidity} in the very special case when the current $T$ is a top-dimensional Euclidean current in $\R^n$ associated with a connected open set, and when the map $\psi$ is $C^1$. In the following subsections, we consider the metric setting and prove Theorem \ref{thm-rigidity} in full generality.

\subsection{Rigidity for top-dimensional Euclidean currents}\label{subsec:Euclidean}
Consider a current $T=[E,\tau_{std},\theta]$, with $E\subset\R^n$ a connected open set and $\theta:E\to \mathbb{N}\setminus\{0\}$ an integrable function. Suppose that $\psi:\R^n\to\R^n$ is a $1$-Lipschitz, $C^1$ map with the property that $\psi_\sharp T=\curr{B_1}$ and $\Mbf(\psi_\sharp T)=\Mbf(T)$. We claim that $\psi$ must be an isometry, i.e., an affine map with gradient in $O(n)$. Notice that connectedness here replaces assumption (3) in Theorem \ref{thm-rigidity}.

Using \eqref{eq:pushforward_formula}, coupled with the fact that in our case $\tilde\theta=1$, we deduce the following chain of inequalities:
\begin{align*}
    \Mbf(\psi_\sharp T)&=\int_{\psi( E)} \tilde\theta (y) d\H^n(y)\\
    &=\int_{\psi( E)}\sum_{z\in \psi^{-1}(y)} \mathrm{sign}(\det \nabla \psi(z)) \theta(z)\,d\H^n(y)\\
    &\leq \int_{\psi( E)}\sum_{z\in \psi^{-1}(y)} |\theta(z)|d\H^n(y)\\
    &=\int_E |\theta(z)|\, |\det \nabla \psi (z)| d\H^n(z)\\
    & \leq \int_E |\theta(z)| d\H^n(z)\\
    &=\Mbf(T)=\Mbf(\psi_\sharp T).
\end{align*}
Besides the triangle inequality and $\Lip(\psi)\le 1$, we have used the area formula to pass from the third to the fourth line. It follows that all inequalities are equalities, and in particular $|\det\nabla \psi(z)|=1$ for $\H^n$-a.e. $z\in E$. By continuity of $\nabla\psi$, and since $E$ is connected, $\det\nabla\psi$ has a constant sign on $E$, and we can assume without loss of generality that it is $+1$.
We deduce that, for $\H^n$-a.e. $z\in\R^n$, we have the following conditions:
\[
\begin{cases}
\Lip(\nabla \psi(z))=1\\
\det\nabla\psi(z)=1.
\end{cases}
\]
From this it follows that $\nabla \psi(z)\in SO(n)$ for $\H^n$-a.e. $z\in E$. To show this one can use for instance the $QR$ decomposition of matrices, writing $\nabla\psi(z)=QR$, with $Q$ orthogonal and $R$ upper triangular. The first condition above implies that all the diagonal elements of $R$ have modulus at most 1, but the second condition implies that they are all of modulus 1, and it follows again from the first condition that there are no off-diagonal terms in $R$. Therefore $R$ is a diagonal matrix with $+1$ or $-1$ in the diagonal. Since the determinant is positive, it follows that $\det\nabla\psi\in SO(n)$.
Finally, one concludes invoking the classical Liouville-type result (see, e.g., \cite{Res}) that a Lipschitz map $\psi:E\to\R^n$, with $E\subset \R^n$ open connected set, whose gradient lies in $SO(n)$ almost everywhere must necessarily be an affine map (and thus an isometry).

\bigskip

From now on we will focus on the rigidity in the metric setting and for a merely $1$-Lipschitz function.

\subsection{Isometry on 1-slices}

Given a complete metric space $X$, we denote by $\delta_x$ the $0$-current associated to the Dirac measure at a point $x\in X$, namely, $ \delta_x(f)=f(x)$ for any  Lipschitz and bounded function $f: X \to \R$.

\begin{lemma}\label{lemma:curve_between_points}
Let $T$ be an integral $1$-current in a complete metric space $(X,d)$, such that 
$\partial T=\delta_b-\delta_a$, for some $a,b\in X$. Then:
\begin{enumerate}
    \item $\Mbf(T)\geq d(a,b)$;
    \item If $\Mbf(T)= d(a,b)$ then $T=\curr{\gamma}$ with $\gamma$ a geodesic between $a$ and $b$.
\end{enumerate} 
\end{lemma}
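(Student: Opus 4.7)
The plan is to invoke the decomposition of integral $1$-currents, Theorem~\ref{thm:decomposition}, and write $T = \sum_i \curr{\gamma_i}$ with each $\gamma_i$ either an injective Lipschitz curve or an injective Lipschitz loop, such that both $\Mbf(T)=\sum_i \Mbf(\curr{\gamma_i})$ and $\Mbf(\partial T)=\sum_i \Mbf(\partial\curr{\gamma_i})$. I will assume $a \neq b$; the case $a=b$ is trivial for~(1), while in~(2) it forces $T=0$, and one may take $\gamma$ to be the constant curve at $a$.

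The first observation is that $\partial \curr{\gamma_i}=0$ when $\gamma_i$ is a loop, whereas for an injective curve with distinct endpoints $p\neq q$ one has $\partial \curr{\gamma_i} = \delta_q - \delta_p$, which has mass $2$. Since $\Mbf(\partial T)=\Mbf(\delta_b - \delta_a)=2$, the boundary-mass additivity forces exactly one non-loop component in the decomposition, call it $\gamma_1$, while all the remaining $\gamma_i$ are loops. Comparing then $\delta_b - \delta_a = \partial T = \partial \curr{\gamma_1} = \delta_{\gamma_1(1)} - \delta_{\gamma_1(0)}$, I would read off $\gamma_1(0)=a$ and $\gamma_1(1)=b$.

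For part~(1), the mass of $\curr{\gamma_1}$ equals the length $\ell(\gamma_1)$ of the injective Lipschitz curve $\gamma_1$, and $\ell(\gamma_1) \geq d(a,b)$ by definition of the metric distance. Together with mass additivity this gives
\[
\Mbf(T) = \ell(\gamma_1) + \sum_{i \geq 2} \ell(\gamma_i) \geq \ell(\gamma_1) \geq d(a,b).
\]
For part~(2), equality in this chain forces $\ell(\gamma_i)=0$ for all $i \geq 2$, so those loop currents vanish, and $\ell(\gamma_1)=d(a,b)$, meaning $\gamma_1$ is a length-minimising curve between $a$ and $b$, i.e.\ a geodesic. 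Hence $T=\curr{\gamma_1}$, as desired.

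The main delicate point is the role played by the boundary-mass additivity inside Theorem~\ref{thm:decomposition}: this is precisely what prevents pathological cancellations among the boundaries $\partial\curr{\gamma_i}$, which a priori could combine endpoints of many curves to produce $\delta_b-\delta_a$. Without this non-cancellation feature, isolating a single distinguished component $\gamma_1$ joining $a$ to $b$ would be the genuine obstacle; with it in hand, the rest of the argument reduces to the elementary length-versus-distance inequality.
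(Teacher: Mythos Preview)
Your proof is correct and follows essentially the same approach as the paper: both apply the decomposition Theorem~\ref{thm:decomposition}, use the boundary-mass identity to deduce there is exactly one non-closed curve joining $a$ to $b$, and then read off both parts from the chain $\Mbf(T)=\ell(\gamma_1)+\sum_{i\ge 2}\ell(\gamma_i)\ge \ell(\gamma_1)\ge d(a,b)$. Your added treatment of the degenerate case $a=b$ and the closing remark on why boundary-mass additivity is the crucial ingredient are welcome clarifications but do not alter the argument.
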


\begin{proof}
(1) From the hypotheses and the Decomposition of integral $1$-currents's Theorem \ref{thm:decomposition}, we know that 
$$T=\curr{\gamma} + \sum_i\curr{\gamma_i},$$ where $\gamma:[0,1]\to X$  is an injective Lipschitz curve with $\gamma(0)=a$ and $\gamma(1)=b$, and $\gamma_i$ are at most countably many simple Lipschitz loops. Observe that there is only one non-closed curve in the decomposition, since the boundary of $T$ has mass 2. Moreover,
\begin{equation}\label{eq:mass_bigger_than_distance}
\Mbf(T)=\Mbf(\curr{\gamma})+\sum_i \Mbf(\curr{\gamma_i})\geq \Mbf(\curr{\gamma})=\ell(\gamma)\geq d(a,b).
\end{equation}
(2) If $\Mbf(T)=d(a,b)$ then every inequality in \eqref{eq:mass_bigger_than_distance} is an equality. Therefore $T=\curr{\gamma}$, and $\gamma:[0,1]\to X$ is a Lipschitz injective curve with $\ell(\gamma)=d(\gamma(0),\gamma(1))$. This implies that $\gamma$ is a geodesic.
\end{proof}

\begin{lemma}[Isometry on $1$-dimensional slices]\label{lemma:isometry_on_slices}
Let $(X,d,T)$ be an $n$-dimensional integral current space, and let $\psi:X\to \R^n$ be a $1$-Lipschitz map satisfying assumptions (1),(2),(3) of Theorem \ref{thm-rigidity}. Let $v\in\S^{n-1}$ and consider the orthogonal projection $p:\R^n\to v^\perp$. Define the slices
\[ 
T_z:=\langle T,\pi,z\rangle\in \Irm_1(X), \qquad \pi:=p\circ\psi
\] 
 for $\H^{n-1}$-a.e. $z\in D:=B\cap v^\perp$. Then, for $\H^{n-1}$-a.e. $z\in D$, $T_z= \curr{\eta_z}$ where $\eta_z$ is a geodesic and $\psi$ is an isometry between $\set(T_z)$ and the Euclidean segment $B\cap p^{-1}(z)$.
\end{lemma}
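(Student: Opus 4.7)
The plan is to use slicing to reduce the $n$-dimensional rigidity problem to a collection of $1$-dimensional problems of the form treated in Lemma \ref{lemma:curve_between_points}. The key is that the pushforward of the slice of $T$ along $\pi=p\circ\psi$ equals the slice of $\psi_\sharp T=\curr{B_1}$ along $p$, which for almost every $z\in D$ is the current $\curr{I_z}$ associated with the oriented segment $I_z=B\cap p^{-1}(z)$ of length $|a_z-b_z|$, where $a_z,b_z\in\partial B$ are its endpoints. So by Proposition \ref{prop:slicing}(4), $\psi_\sharp T_z=\curr{I_z}$ for a.e.\ $z\in D$.

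Next, I would identify the boundary of $T_z$. By Proposition \ref{prop:slicing}(3), $\partial T_z=(-1)^{n-1}\langle\partial T,\pi,z\rangle$, which is concentrated on $\set(\partial T)\cap\pi^{-1}(z)$. On the other hand, $\psi_\sharp\partial T_z=\partial\psi_\sharp T_z=\delta_{b_z}-\delta_{a_z}$. Since $\partial T_z$ is an integer $0$-current and $\psi$ is injective on $\set(\partial T)$ (hypothesis (3)), the only way for this $0$-current to push forward to $\delta_{b_z}-\delta_{a_z}$ is if $\partial T_z=\delta_{\tilde b_z}-\delta_{\tilde a_z}$, where $\tilde a_z,\tilde b_z\in\set(\partial T)$ are the unique preimages of $a_z,b_z$ under $\psi$.

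The core of the argument is then the chain of inequalities
\[
\Mbf(T_z)\;\geq\;d(\tilde a_z,\tilde b_z)\;\geq\;|\psi(\tilde a_z)-\psi(\tilde b_z)|\;=\;|a_z-b_z|\;=\;\Mbf(\psi_\sharp T_z),
\]
where the first inequality is Lemma \ref{lemma:curve_between_points}(1) and the second uses that $\psi$ is $1$-Lipschitz. Integrating over $z\in D$, the left side is bounded by $\Lip(\pi)^{n-1}\Mbf(T)\leq \Mbf(T)=\omega_n$ by Proposition \ref{prop:slicing}(2) and hypothesis (2), while the right side integrates (by Fubini on the ball) to $\H^n(B)=\omega_n$ as well. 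Hence for $\H^{n-1}$-a.e.\ $z\in D$ all inequalities are equalities, in particular $\Mbf(T_z)=d(\tilde a_z,\tilde b_z)$. Lemma \ref{lemma:curve_between_points}(2) then gives $T_z=\curr{\eta_z}$ with $\eta_z$ a geodesic from $\tilde a_z$ to $\tilde b_z$ of length $|a_z-b_z|$.

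Finally, to upgrade this to an isometry between $\set(T_z)$ and $I_z$, I would parametrize $\eta_z\colon[0,L]\to X$ by arclength with $L=|a_z-b_z|$. Then $\psi\circ\eta_z$ is a $1$-Lipschitz curve in $\R^n$ connecting two points at distance $L$, so it must be the affine parametrization of $I_z$; in particular it is an isometry from $[0,L]$ onto $I_z$. Combining with $d(\eta_z(s),\eta_z(t))\leq|s-t|$ (from the geodesic being parametrized by arclength) and $|s-t|=|\psi(\eta_z(s))-\psi(\eta_z(t))|\leq d(\eta_z(s),\eta_z(t))$ forces equality throughout, so $\psi$ restricted to $\set(T_z)=\eta_z([0,L])$ is an isometry onto $I_z$. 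The only delicate point I expect is verifying the boundary identification $\partial T_z=\delta_{\tilde b_z}-\delta_{\tilde a_z}$: the rest is a clean sandwich argument once slicing and hypothesis (3) are properly combined.
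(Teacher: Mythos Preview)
Your proposal is correct and follows essentially the same approach as the paper's proof: identify $\psi_\sharp T_z=\curr{I_z}$ via Proposition~\ref{prop:slicing}(4), pin down $\partial T_z=\delta_{\tilde b_z}-\delta_{\tilde a_z}$ using Proposition~\ref{prop:slicing}(1),(3) together with hypothesis~(3), run the sandwich argument via Proposition~\ref{prop:slicing}(2) and Fubini to force $\Mbf(T_z)=d(\tilde a_z,\tilde b_z)=|a_z-b_z|$, and conclude with Lemma~\ref{lemma:curve_between_points}(2). The only cosmetic difference is ordering: the paper first establishes $\Mbf(T_z)=\Mbf(\curr{\gamma_z})$ by integration and then identifies the boundary, whereas you identify the boundary first and fold Lemma~\ref{lemma:curve_between_points}(1) into the chain before integrating; both routes yield the same equalities.
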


We remark that this is the key lemma where we use that the target space is Euclidean. More precisely, we use that the ball can be foliated by geodesics, in such a way that these geodesics are the slices with respect to some $1$-Lipschitz map of the current associated to the ball. In other words, these slices realize the equality in the second equation of item (2) in Proposition \ref{prop:slicing}.

\begin{figure}
    \centering
    \def\svgwidth{\columnwidth}
    \import{./}{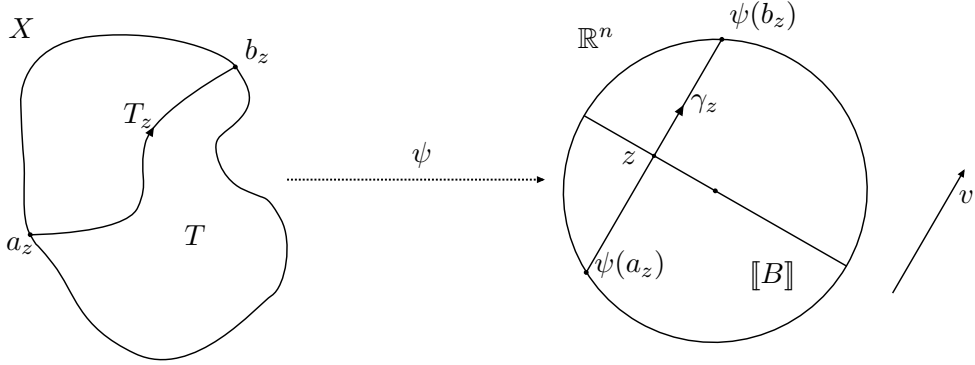}

    \caption{In the proof of Lemma \ref{lemma:isometry_on_slices} we slice the current $\psi_\sharp T$ with respect to the projection map $p: \R^n \to v^\perp$, obtaining currents $\curr{\gamma_z} \in \Irm_1(\R^n)$ supported in segments perpendicular to $v^\perp$ passing through $z$ and contained in $B$. On the other hand, we slice $T$ with respect to 
    $\pi=p\circ \psi$ obtaining currents $T_z \in \Irm_1(X)$. 
    Note that $\psi_\sharp T_z=\curr{\gamma_z}$. 
    By slicing and the coarea formula we conclude that $\Mbf(T_z)=\Mbf(\curr{\gamma_z})$. Using condition (3) of Theorem \ref{thm-rigidity} and Lemma \ref{lemma:curve_between_points}, we conclude that there are exactly two points $a_z, b_z \in \set(\partial T_z)$ and that $T_z= \curr{\eta_z}$ with $\eta_z$ a geodesic connecting $a_z$ and $b_z$. From this, it easily follows that $\psi$ is an isometry between $\set(T_z)$ and $\set(\curr{\gamma_z})=B\cap \pi^{-1}(z)$.}
    \label{fig:slicing}
\end{figure}

\begin{proof}
For every $z\in D$, let
 $\gamma_z$ be the Euclidean segment $B\cap p^{-1}(z)$ with the orientation parallel to $v$. Note that up to a sign, 
 \[
\curr{\gamma_z}= \langle \curr{B}, p, z \rangle = \langle \psi_\sharp T, p, z \rangle,
 \]
where we used that $\psi_\sharp T=\curr{B}$. By item (4) of Proposition \ref{prop:slicing}, we have (up to a sign)
\[
\psi_\sharp T_z = \psi_\sharp  \langle T,\pi,z\rangle\ =
 \langle \psi_\sharp T,p,z \rangle 
\]
for $\H^{n-1}$-a.e. $z\in D$. Thus, $\psi_\sharp T_z =\curr{\gamma_z}$
and since $\psi$ is $1$-Lipschitz we get 
\[
\Mbf(\curr{\gamma_z})\leq \Mbf(T_z)\qquad\text{for $\H^{n-1}$-a.e. $z\in D$}.
\]
Now applying item (2) of Proposition \ref{prop:slicing} to the slices of $T$ with respect to $\pi$, $T_z$, and the slices of 
$\psi_\sharp T$ with respect to $p$, $\curr{\gamma_z}$, we get
\begin{align*}
\int_{D}\Mbf(T_z)d\H^{n-1}(z)& =  \Mbf(T\restrict d\pi)    \\
& \leq  \Mbf(T)=\Mbf(\psi_\sharp T)\\
&=\int_D \Mbf(\curr{\gamma_z})d\H^{n-1}(z)\\
&\leq \int_D \Mbf(T_z)d\H^{n-1}(z).
\end{align*}
Therefore all inequalities are equalities, and specifically 
\[
\Mbf(T_z)=\Mbf(\curr{\gamma_z})\qquad\text{for $\H^{n-1}$-a.e. $z\in D$}.
\]

We now claim that
\begin{equation*}
\Mbf(\partial T_z)=2\qquad\text{for $\H^{n-1}$-a.e. $z\in D$.}  
\end{equation*}
Since $\psi_\sharp T_z= \curr{\gamma_z}$, the inequality $\Mbf(\partial T_z)\geq \Mbf(\partial \curr{\gamma_z})= 2$ is immediate from the $1$-Lipschitz property of $\psi$.

To prove the other inequality, by item (3) of Proposition \ref{prop:slicing} for $\H^{n-1}$-a.e. $z\in D$ we have
\[
\partial T_z=\partial \langle T,\pi,z\rangle=-\langle \partial T,\pi,z\rangle,
\]
so that by item (1) of Proposition \ref{prop:slicing} we infer that $\partial T_z$ is supported on $\set(\partial T)\cap \pi^{-1}(z)$. Since $\pi^{-1}(z)=\psi^{-1}(p^{-1}(z))$, and $p^{-1}(z)\cap \partial B$ has cardinality at most 2, from assumption (3) of Theorem \ref{thm-rigidity} we deduce that $\pi^{-1}(z)$ has at most 2 points (denote them by $a_z$ and $b_z$). Since we know that $\partial\curr{\gamma_z}=\delta_{\psi(b_z)} - \delta_{\psi(a_z)}$, from the injectivity assumption it follows that $\partial T_z=\delta_{b_z}-\delta_{a_z}$.
This proves  the claim.

We are now in the position to apply Lemma \ref{lemma:curve_between_points}. By the previous paragraphs and the fact that $\psi$ is $1$-Lipschitz, 
\[
\Mbf(T_z)=\Mbf(\curr{\gamma_z})=|\psi(a_z)-\psi(b_z)|\leq d(a_z,b_z),
\]
therefore we conclude that
$d(a_z,b_z)=|\psi(b_z)-\psi(a_z)|$ and that $T_z=\curr{\eta_z}$ where $\eta_z:[0,1] \to X$ is a geodesic between $\eta_z(0)=a_z$ and $\eta_z(1)=b_z$. By the fact that $\eta_z$ is a geodesic, for every $t,s \in [0,1]$, 
\[
|\psi(\eta_z(t))-\psi(\eta_z(s))|=d(\eta_z(t),\eta_z(s))=d(a_z,b_z)|t-s|=
|\psi(a_z)-\psi(b_z)||t-s|,\]
thus we conclude that $\psi$ is an isometry from $\set(T_z)$ to 
$\psi(\set(T_z))=B\cap p^{-1}(z)$. 
\end{proof}

\subsection{Essential injectivity}

We start with the following simple observation that will be used several times throughout.

\begin{lemma}\label{lemma:preserving_subsets}
Let $(X,d)$ and $(Y,\rho)$ be complete metric spaces, and let $\psi:X\to Y$ be a $1$-Lipschitz map. Consider an $n$-current $T$ in $X$ with finite mass, and suppose that $\Mbf(\psi_\sharp T)=\Mbf(T)$. Then for every $\H^n$-measurable set $A\subset X$ we also have $\Mbf(\psi_\sharp( T\restrict A))=\Mbf(T\restrict A)$.
\end{lemma}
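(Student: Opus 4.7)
The plan is to exploit a short chain of inequalities combining three ingredients already available in the paper: (i) the $1$-Lipschitz bound $\Mbf(\psi_\sharp S)\le\Mbf(S)$ valid for any $n$-current $S$, recalled in Section \ref{sec:prelim}; (ii) the subadditivity of mass under sums of currents, $\Mbf(S_1+S_2)\le\Mbf(S_1)+\Mbf(S_2)$; and (iii) the additivity of the mass measure $\|T\|$ on disjoint $\H^n$-measurable sets, which follows from the representation \eqref{eq:mass_formula} of $\|T\|$ as $\lambda\theta\H^n\restrict E$ for some rectifiable set $E$.

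The approach is to decompose $T = T\restrict A + T\restrict A^c$, with $A^c:=X\setminus A$, and push both pieces forward by linearity to obtain $\psi_\sharp T = \psi_\sharp(T\restrict A) + \psi_\sharp(T\restrict A^c)$. Applying first the hypothesis $\Mbf(\psi_\sharp T)=\Mbf(T)$, then (ii) to split the mass of $\psi_\sharp T$, then (i) to each summand, and finally (iii) to recombine, one gets
\begin{equation*}
\Mbf(T) = \Mbf(\psi_\sharp T) \le \Mbf(\psi_\sharp(T\restrict A)) + \Mbf(\psi_\sharp(T\restrict A^c)) \le \Mbf(T\restrict A) + \Mbf(T\restrict A^c) = \Mbf(T).
\end{equation*}
All the intermediate inequalities must therefore be equalities. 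Because each summand in the middle line is individually bounded by the corresponding summand in the line below, and the totals agree, I conclude that $\Mbf(\psi_\sharp(T\restrict A)) = \Mbf(T\restrict A)$, which is exactly the claim (and, as a bonus, the same equality on $A^c$).

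There is no genuine obstacle: the only small point worth verifying is that $T\restrict A$ is well defined as a current for any $\H^n$-measurable $A$, which is immediate from the fact that $\|T\|$ is a finite Borel measure concentrated on the countably rectifiable set $\set(T)$, so the restriction acts on Borel-measurable forms in the usual way and its mass is simply $\|T\|(A)$.
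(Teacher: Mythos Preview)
Your proof is correct and follows essentially the same route as the paper's own argument: decompose $T=T\restrict A+T\restrict A^c$, apply subadditivity of mass and the $1$-Lipschitz bound to each piece, and conclude from the chain of equalities. The only minor remark is that the additivity $\Mbf(T\restrict A)+\Mbf(T\restrict A^c)=\Mbf(T)$ already follows from $\|T\|$ being a finite Borel measure, so you need not invoke the rectifiable representation \eqref{eq:mass_formula}.
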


\begin{proof} Denote by $A^c:=X\setminus A$. Then
\begin{align*}
\Mbf(\psi_\sharp T)&\leq \Mbf(\psi_\sharp(T\restrict A))+\Mbf(\psi_\sharp(T\restrict A^c))\\
&\leq \Mbf(T\restrict A)+\Mbf(T\restrict A^c)=\Mbf(T),
\end{align*}
where we have used that the mass of every current is not increased by a $1$-Lipschitz map. It follows that all inequalities are equalities, and in particular $\Mbf(\psi_\sharp(T\restrict A))=\Mbf(T\restrict A)$.
\end{proof}

\begin{lemma}[Essential injectivity]\label{lemma-essentialInj}
Suppose that $T$ is an integral $n$-current in a complete metric space $(X,d)$, and $\psi:X\to\R^n$ is a $1$-Lipschitz map such that $\Mbf(\psi_\sharp T)=\Mbf(T)$ and $\psi_\sharp T=\curr{B_1}$. Then:
\begin{enumerate}
    \item There exists an $\H^n$-measurable set $A\subset X$, with $\|T\|(X\setminus A)=0$, such that $\psi$ is injective on $A$;
    \item $T$ has multiplicity $1$ $\|T\|$-almost everywhere.
\end{enumerate}
\end{lemma}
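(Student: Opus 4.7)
The strategy is to squeeze the mass of $T$ between an integral that counts the preimages of $\psi$ weighted by $|\theta|$ and the mass $\omega_n$ of $\psi_\sharp T=\curr{B_1}$, forcing multiplicity $1$ and essential injectivity simultaneously. I represent $T$ in its canonical rectifiable form $[E,\tau,\theta]$, so that $\Mbf(T)=\int_E \lambda|\theta|\,d\H^n$ by \eqref{eq:mass_formula}. Applying the coarea formula \eqref{eq:coarea_formula} to the measurable function $|\theta|$ and using the pointwise inequality $\mathbf{C}_n(d^E\psi_x)\le \lambda(x)$ from \eqref{eq:lambda_versus_Jacobian}, I obtain
\[
\int_{\R^n}\sigma(y)\,d\H^n(y)=\int_E |\theta|\,\mathbf{C}_n(d^E\psi)\,d\H^n\le \int_E |\theta|\,\lambda\,d\H^n=\Mbf(T),
\]
where $\sigma(y):=\sum_{x\in E\cap\psi^{-1}(y)}|\theta(x)|$.

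The key claim is that $\sigma(y)\ge 1$ for $\H^n$-a.e.~$y\in B_1$. To prove it I invoke the structure Theorem \ref{thm:parametric_representation} to write $T=\sum_i (f_i)_\sharp \curr{\theta_i}$, so that $\psi_\sharp T=\sum_i (\psi\circ f_i)_\sharp \curr{\theta_i}$ is a sum of Euclidean $n$-currents in $\R^n$ to which the pushforward formula \eqref{eq:pushforward_formula} applies summand-by-summand. The hypothesis $\psi_\sharp T=\curr{B_1}$ then yields, for $\H^n$-a.e.~$y\in B_1$,
\[
1=\sum_i\sum_{z\in (\psi\circ f_i)^{-1}(y)}\mathrm{sign}(\det d(\psi\circ f_i)|_z)\,\theta_i(z),
\]
and the triangle inequality, combined with \eqref{eq:theta_formula} and the pairwise disjointness of the $f_i(K_i)$, gives $\sigma(y)\ge 1$. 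Feeding this back into the previous estimate produces the chain
\[
\omega_n=\int_{B_1}1\,d\H^n\le \int_{\R^n}\sigma\,d\H^n\le \Mbf(T)=\Mbf(\psi_\sharp T)=\omega_n,
\]
so every inequality is an equality; hence $\sigma=\1_{B_1}$ $\H^n$-a.e.~on $\R^n$. Since $\theta$ takes nonzero integer values on $E$, this forces at most one preimage in $E$ over $\H^n$-a.e.~$y$, and $|\theta|=1$ at any such preimage.

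To conclude both items, let $F\subset \R^n$ denote the $\H^n$-null set of those $y$ where $\sigma(y)\ne \1_{B_1}(y)$. Then $\psi_\sharp(T\restrict \psi^{-1}(F))$ is an integral current in $\R^n$ concentrated on the null set $F$, hence has zero mass; by Lemma \ref{lemma:preserving_subsets} this forces $\|T\|(\psi^{-1}(F))=0$. Setting $A:=E\setminus \psi^{-1}(F)$, one has $\|T\|(X\setminus A)=0$; moreover, for every $x\in A$ the bound $\sigma(\psi(x))\le 1$ forces $\psi^{-1}(\psi(x))\cap E=\{x\}$, yielding injectivity of $\psi$ on $A$, and $|\theta(x)|=1$, proving both (1) and (2). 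The main technical obstacle is the verification of the key claim $\sigma\ge 1$ on $B_1$, which requires transferring the Euclidean pushforward formula \eqref{eq:pushforward_formula} through the metric structure representation $T=\sum_i(f_i)_\sharp\curr{\theta_i}$ while tracking orientations and the disjointness of the pieces.
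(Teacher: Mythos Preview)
Your argument is correct and follows essentially the same route as the paper: both proofs write $T=\sum_i(f_i)_\sharp\curr{\theta_i}$, apply the Euclidean pushforward formula \eqref{eq:pushforward_formula} to each $\psi\circ f_i$ to obtain $1=|\tilde\theta(y)|\le\sigma(y)$, and then close the chain of inequalities via the coarea formula \eqref{eq:coarea_formula} and the bound $\mathbf{C}_n(d^E\psi_x)\le\lambda(x)$. Your extraction of the full-measure injectivity set via Lemma~\ref{lemma:preserving_subsets} applied to $\psi^{-1}(F)$ is a minor variant of the paper's application of the same lemma to the complementary set.
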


\begin{proof}
By Theorem \ref{thm:parametric_representation} $T$ can be written $T$ as
\[
T=\sum_i (f_i)_\sharp\curr{\theta_i},
\]
for some Lipschitz maps $f_i:K_i\to X$, where $K_i\subset\R^n$ are compact sets and $f_i(K_i)$ are pairwise disjoint, and some multiplicities $\theta_i:K_i\to \mathbb{N}$. Therefore
\[
\psi_\sharp T=\sum_i (\psi\circ f_i)_\sharp \curr{\theta_i}.
\]
Since $\psi\circ f_i:K_i \subset \R^n\to\R^n$, we know from \eqref{eq:pushforward_formula} that each term in the right hand side of the previous equation can be written as
\[
(\psi\circ f_i)_\sharp \curr{\theta_i}=[\tilde E_i,\tau_{std},\tilde\theta_i]
\]
where $\tilde E_i=\psi(f_i(K_i))$, $\tau_{std}$ is the standard orientation of $\R^n$, and
\[
\tilde\theta_i(y)=\sum_{z\in (\psi\circ f_i)^{-1}(y)} \pm \theta_i(z)\qquad\text{for $\H^n$-a.e. $y\in \psi(f_i(K_i))$,}
\]
where the plus or minus sign equals the sign of the Jacobian determinant of $\psi\circ f_i$ at the point $z$. Setting $E=\bigcup_i f_i(K_i)$, by summing over $i$ it follows that
\[
\psi_\sharp T=[\psi(E), \tau_{std},\tilde \theta]
\]
where
\[
\tilde \theta(y)=\sum_{i}\sum_{z\in (\psi\circ f_i)^{-1}(y)} \pm\theta_i(z)
\qquad\text{for $\H^n$-a.e. $y\in \psi(E)$}.
\]
Now, since $f_i$ is a bijection between $K_i$ and $f_i(K_i)$, by just relabeling points we can write, for any fixed $i$,
\begin{align*}
\sum_{z\in(\psi\circ f_i)^{-1}(y)}\pm\theta_i(z)&=\sum_{x\in\psi^{-1}(y)\cap f_i(K_i)}\pm\theta_i(f_i^{-1}(x))\\
&\overset{\eqref{eq:theta_formula}}{=} \sum_{x\in\psi^{-1}(y)\cap f_i(K_i)}\pm\theta(x).
\end{align*}
Summing over $i$ it follows from above that
\begin{align*}
    \tilde \theta(y)&=\sum_{i}\sum_{z\in (\psi\circ f_i)^{-1}(y)} \pm\theta_i(z)\\
    &=\sum_i\sum_{x\in\psi^{-1}(y)\cap f_i(K_i)}\pm\theta(x)\\
    &=\sum_{x\in\psi^{-1}(y)} \pm\theta(x)\qquad\text{for $\H^n$-a.e. $y\in \psi(E)$.}
\end{align*}
By the triangle inequality it follows that
\begin{equation}\label{eq:inequality_multiplicities}
|\tilde\theta(y)|\leq \sum_{x\in\psi^{-1}(y)} |\theta(x)|\qquad\text{for $\H^n$-a.e. $y\in \psi(E)$.}
\end{equation}
From this we derive the following estimate:
\begin{align*}
    \Mbf(\psi_\sharp T)&=\int_{\psi( E)} |\tilde\theta (y)|d\H^n(y)\\
    &\leq \int_{\psi( E)}\sum_{x\in \psi^{-1}(y)} |\theta(x)|d\H^n(y)\\
    &=\int_E |\theta(x)| \mathbf{C}_n(d^E\psi_x) d\H^n(x)\\
    & \leq \int_E |\theta(x)| \lambda (x) d\H^n(x)\\
    &=\Mbf(T)=\Mbf(\psi_\sharp T),
\end{align*}
where in the previous chain of inequalities we have used, in the following order, that the area factor of the Euclidean space equals $1$, \eqref{eq:mass_formula}, \eqref{eq:inequality_multiplicities}, the coarea formula \eqref{eq:coarea_formula}, \eqref{eq:lambda_versus_Jacobian}, \eqref{eq:mass_formula} again, and the mass-preserving hypothesis. It follows that all inequalities are equalities, and in particular
\begin{equation}\label{eq:equality_multiplicities}
1=|\tilde\theta(y)|= \sum_{x\in\psi^{-1}(y)} |\theta(x)|\qquad\text{for $\H^n$-a.e. $y\in B$}
\end{equation}
because we know by assumption that $\psi_\sharp T=\curr{B}=[B,\tau,1]$. Now \eqref{eq:equality_multiplicities} implies both that the cardinality of $\psi^{-1}(y)$ is 1 and that $|\theta(\psi^{-1}(y))|=1$ for all points $y\in G$, for some measurable $G\subset B$ with full $\H^n$-measure. Setting $A:=\psi^{-1}(G)$ we obtain the desired properties. Indeed, $\psi$ is injective on $A$, $|\theta|=1$ on $A$, and moreover $\psi_\sharp(T\restrict A)=(\psi_\sharp T)\restrict G=\psi_\sharp T$, therefore by Lemma \ref{lemma:preserving_subsets} $\Mbf(T\restrict A)=\Mbf(\psi_\sharp(T\restrict A))=\Mbf(\psi_\sharp T)=\Mbf(T)$. It follows that $\|T\|(X\setminus A)=0$.
\end{proof}

\subsection{Proof of the rigidity theorem}

In the proof of the main theorem we will need the following elementary lemma.

\begin{lemma}\label{lemma:Lebesgue}
Let $A_1,A_2$ be subsets of $\R^n$ with positive Lebesgue measure. Then there exists $v\in\mathbb{S}^{n-1}$ such that, denoting by $p_v: \R^n \to \R^n$ the orthogonal projection on $v^\perp$, it holds
\[
\H^{n-1}(p_v(A_1)\cap p_v(A_2))>0.
\]
\end{lemma}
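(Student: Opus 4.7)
The plan is to choose the direction $v$ based on Lebesgue density points of $A_1$ and $A_2$, and then to combine Fubini with inclusion--exclusion to ensure the two projections overlap on a set of positive $\H^{n-1}$-measure.

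To begin, I would invoke the Lebesgue density theorem to pick $a\in A_1$ and $b\in A_2$ that are density-$1$ points of the respective sets. Then set $v:=(a-b)/|a-b|$ if $a\neq b$, and take $v$ arbitrary if $a=b$. The key feature of this choice is that $p_v(a)=p_v(b)=:z_0$, so the two projected points coincide in $v^\perp$. For any $\eps>0$, I would then choose $r>0$ small enough that $\L^n(B(a,r)\setminus A_1)\le \eps\omega_n r^n$ and $\L^n(B(b,r)\setminus A_2)\le \eps\omega_n r^n$; the projections $p_v(B(a,r))$ and $p_v(B(b,r))$ both equal the same ball $B_{v^\perp}(z_0,r)\subset v^\perp$.

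The next step is to show that each ``shadow'' $p_v(A_i\cap B(a_i,r))$ covers most of the smaller ball $B_{v^\perp}(z_0,r/2)$ (with $a_1=a,\,a_2=b$). Indeed, any $z\in B_{v^\perp}(z_0,r/2)$ lying outside the $i$-th shadow must have its full chord $(z+\R v)\cap B(a_i,r)$, whose length is at least $r\sqrt{3}$, entirely contained in $B(a_i,r)\setminus A_i$. Fubini then forces the $\H^{n-1}$-measure of these bad $z$'s to be at most $\eps\omega_n r^{n-1}/\sqrt{3}$. Applying inclusion--exclusion to the two shadows inside $B_{v^\perp}(z_0,r/2)$, their intersection has $\H^{n-1}$-measure at least
\[
\omega_{n-1}(r/2)^{n-1}-2\eps\omega_n r^{n-1}/\sqrt{3},
\]
which is positive once $\eps$ is chosen small in terms of $n$ only. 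Since this intersection is contained in $p_v(A_1)\cap p_v(A_2)$, the lemma follows.

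I do not foresee any genuine obstacle: the chord-length estimate requires only that the projected balls be concentric in $v^\perp$, which is exactly what the alignment $v\parallel(a-b)$ provides; the rest is a routine Lebesgue density / Fubini bookkeeping.
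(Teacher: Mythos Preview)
Your proposal is correct and follows essentially the same route as the paper: pick Lebesgue density points $a_1\in A_1$, $a_2\in A_2$, take $v$ parallel to $a_1-a_2$ so that the projected centers coincide, then combine a Fubini estimate on the complements with inclusion--exclusion. The only cosmetic difference is that the paper works with cylinders $D_r\times I_r$ (making the Fubini step a one-liner without a chord-length bound), whereas you use round balls and pass to radius $r/2$ to get the uniform chord length $\ge r\sqrt{3}$; both variants are equally valid.
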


\begin{proof}
Given any two Lebesgue points $a_1\in A_1$ and $a_2\in A_2$, we prove that the conclusion holds with $v$ parallel to $a_2-a_1$. We can assume without loss of generality that $a_1$ is the origin and $a_2$ is the $n$-th basis vector, and thus $v=e_n$. In the Lebesgue density theorem we can equivalently replace balls by cylinders $C_r(a_i)=D_r(p_v(a_i))\times I_r$, where $D_r(\pi_v(a_i))\subset \R^{n-1}$ is a disk of radius $r$ centered at $\pi_v(a_i)$, and $I_r$ is a segment of length $2r$, so that we can assume
\begin{equation}\label{eq:Lebesgue}
\lim_{r\to 0}\frac{|A_i\cap C_r(a_i)|}{|C_r(a_i)|}=1,\qquad i=1,2.
\end{equation}
It follows that, for $i=1,2$
\[
\H^{n-1}(D_r\setminus p_v(A_i))\leq \frac{1}{2r}|C_r(a_i)\setminus A_i|=\H^{n-1}(D_r) \frac{|C_r(a_i)\setminus A_i|}{|C_r(a_i)|}.
\]
The ratio to the right goes to zero as $r\to 0$ by \eqref{eq:Lebesgue}, thus for every $\eps>0$, for sufficiently small $r>0$
\begin{equation}\label{eq:1-eps}
\H^{n-1}(D_r\cap p_v(A_i))\geq (1-\eps) \H^{n-1}(D_r).
\end{equation}
Setting $\mu:=\H^{n-1}\restrict D_r$, by the inclusion-exclusion principle we have
\begin{align*}
    \mu(p_v(A_1)\cap p_v(A_2))& =\mu(p_v(A_1))+\mu(p_v(A_2))-\mu(p_v(A_1)\cup p_v(A_2))\\
    &\ge 2(1-\eps) \H^{n-1}(D_r)-\H^{n-1}(D_r)\\
    &=(1-2\eps) \H^{n-1}(D_r),
\end{align*}
where we used twice \eqref{eq:1-eps}, together with $\mu(p_v(A_1)\cup p_v(A_1))\le \H^{n-1}(D_r)$.
This shows that for $r>0$ small enough $\H^{n-1}(D_r\cap p_v(A_1)\cap p_v(A_2))>0$, which implies the thesis.
\end{proof}

We finally come to the proof of the main theorem.
\begin{proof}[Proof of Theorem \ref{thm-rigidity}]
Take any two density points $x_1,x_2$ of $\set(T)\cap A$, where $A$ is the injectivity set defined in Lemma \ref{lemma-essentialInj}. We are going to show that $d(x_1,x_2)=|\psi(x_1)-\psi(x_2)|$.

Take any $\delta>0$. Since $x_1,x_2$ belong to $\set(T)$ then $T\restrict B(x_i,\delta)$ are non zero for $i=1,2$. From the mass-preserving hypothesis and Lemma \ref{lemma:preserving_subsets} we deduce that also $\psi_\sharp(T\restrict B(x_i,\delta))$ are non zero for $i=1,2$, and moreover from the $1$-Lipschitz property they are supported on $B(\psi(x_1),\delta)$ and $B(\psi(x_2),\delta)$ respectively. From the fact that the latter are non-trivial rectifiable $n$-dimensional currents in $\R^n$, their mass is absolutely continuous with respect to Lebesgue measure. This implies that $A_i:=\psi(\set(T)\cap A\cap B(x_i,\delta))$ is a set of positive measure for $i=1,2$. By applying Lemma \ref{lemma:Lebesgue}, we find $v\in \mathbb{S}^{n-1}$ such that $p_v(A_1)\cap p_v(A_2)$ has positive $\H^{n-1}$-measure. Combining this with Lemma \ref{lemma:isometry_on_slices} (with $p=p_v$), we find at least two points $y_1\in B(\psi(x_1),\delta)$ and $y_2\in B(\psi(x_2),\delta)$ such that $\psi$ is an isometry between a geodesic with end-points in $\psi^{-1}(y_1)$ and $\psi^{-1}(y_2)$, and the segment between $y_1$ and $y_2$. We deduce that
\[
d(\psi^{-1}(y_1),\psi^{-1}(y_2))=|y_1-y_2|
\]
and consequently, by the triangle inequality
\[
\big|d(x_1,x_2)-|\psi(x_1)-\psi(x_2)|\big|\leq 4\delta.
\]
Here we used that $y_i\in B(\psi(x_i),\delta)$ and $\psi^{-1}(y_i)\in B(x_i,\delta)$ for $i=1,2$.
From the arbitrariness of $\delta>0$ we deduce that $d(x_1,x_2)=|\psi(x_1)-\psi(x_2)|$ for all $x_1,x_2$ density points of $\set(T)\cap A$. Since the latter is a set of full $\|T\|$-measure, it is dense in the closure of $\set(T)$, therefore the isometry property extends by density to $\mathrm{spt}(T)=\overline{\set(T)}$.
\end{proof}

\section{Examples and remarks}\label{sec-examples}

Here we present two examples discussing the hypotheses required in 
Theorem \ref{thm-rigidity} and Theorem \ref{thm-stability}, and add a remark comparing Theorem \ref{thm-stability} with Theorem 4.2 of Allen and the second named author \cite{AP}.

\begin{example}\label{ex-giacomo}
Assumption (3) in Theorem \ref{thm-rigidity}, or a similar one, is really necessary to rule out
several counterexamples, such as a disconnected space. 
For instance,
let $B_{\pm}$ be the sets given by 
$$B_{-}=\{ (x,y) \in \R^2 \,|\, x \leq -1, \,\, (x+1)^2 + y^2 \leq 1\},$$ 
and 
$$B_{+}=\{ (x,y) \in \R^2 \,|\, x \geq 1, \,\, (x-1)^2 + y^2 \leq 1\}.$$
Define $X = B_{-} \cup B_{+}$, endow this set 
with the restriction of the Euclidean distance, d, and let $T$ be the current $T = \curr{X}$ where we give the standard orientation to both sets $B_{-}$ and $B_{+}$.
Then the map $\psi: X \to B \subset \R^2$ given by 
\begin{equation*}
\psi(x,y)= 
\begin{cases}
(x+1,y) & \text{if } (x,y)\in B_-\\
(x-1,y) & \text{if } (x,y) \in B_+ \\
\end{cases}
\end{equation*}
is a $1$-Lipschitz function. The integral current space $(X,d,T)$ and $\psi$ satisfy all the assumptions in Theorem \ref{thm-rigidity} except item $(3)$, but this map is clearly not an isometry.
\end{example}

\begin{example}\label{ex-diffTop}
Note that in Theorem \ref{thm-stability} $(X,d)$ could have a different topology than each 
$(X_j, d_j)$. Consider, for example, 
$(X_j, d_j, T_j)$ with 
$X_j=\overline{B_R\setminus B_{\epsilon_j}} \subset \R^n$,
$d_j$ the length distance induced by the Euclidean distance and $T_j= \curr{\overline{B_R\setminus B_{\epsilon_j}}}$, where $\epsilon_j \to 0$. 
The intrinsic flat limit of this sequence equals $(\overline{B_R}, d_{Euc},\curr{\overline{B_R}})$, and 
taking $\psi_j$ as the inclusion maps, all the hypotheses in Theorem \ref{thm-stability} are satisfied. Furthermore, $\psi_j({\partial T_j}) \neq \curr{\partial B_R}$. A similar behavior occurs in \cite[{Theorem 1.3}]{HLSa}, c.f. Theorem \ref{thm-HLS}, where the integral current spaces have two disconnected boundary components: the inner boundaries, $\partial M_j$, converge to the zero integral current space, while the outer boundaries converge to an integral current space that is bi-Lipschitz equivalent to $(\partial \overline{B}_R \times \{0\}, d_{Euc}, \curr{\partial \overline{B}_R \times \{0\}})$.
\end{example}

\begin{remark}\label{ex-APvsDNP}
We now compare Theorem \ref{thm-stability}  with \cite[{Theorem 4.2}]{AP}, which states the following.
Let $M$ be a compact and oriented manifold, and
$(M, d_j, T_j)$, $j \in \mathbb N$, be a sequence of integral current spaces with $d_j$ given by a $C^0$ Riemannian metric on $M$, and $T_j$ the current with weight $1$. Let $d_0$ be a distance given by a $C^2$ Riemannian metric on $M$, assume that $(M,d_0)$  is totally convex, and let $T_0$ be the corresponding current with weight $1$. If there exist $\psi_j:(M, d_j) \to (M, d_0)$ $C^1$ bi-Lipschitz maps with $\Lip(\psi_j) \leq 1$, 
$\diam(M,d_j) \leq D$, $\liminf_{j \to \infty}\vol(M,d_j) \leq \vol(M,d_0)$, 
then 
$(M, d_j, T_j)$ converges in the intrinsic flat sense to 
$(M, d_0, T_0)$. 
So, when taking $(M,d_0)=(B_1, d_{Euc})$, Theorem \ref{thm-stability} is stronger since the $(X_j,d_j)$ do not need to be homeomorphic to $(M,d_0)$, and the maps $\psi_j$ only need to be $1$-Lipschitz. On the other hand \cite[Theorem 4.2]{AP} allows more general manifolds as a limit space, and not only the ball.
\end{remark}

\section{Application: stability of the Positive Mass Theorem}\label{sec-applications}
We now apply Theorem \ref{thm-rigidity} to deduce the stability result of Theorem \ref{thm-stability}. We then show how to derive the stability of the positive mass theorem of graphical manifolds of Huang--Lee--Sormani.

\begin{proof}[Proof of Theorem \ref{thm-stability}]
Proceeding as in \cite[Proof of Theorem 1.3]{HLSa}:
\begin{align*}
\vol(B(R))&\le \Mbf(\psi_{\sharp}  T) \\
& \le \Mbf(T)\\
 &\le \liminf_{j\to\infty} \Mbf(T_j)\\
& \le \vol(B(R)),
\end{align*}
where the minimizing property of the disk among Euclidean integral currents with the same boundary is used, the fact that $\Lip(\psi)\le 1$ and lower semicontinuity of mass. We recall that the notion of metric mass for metric rectifiable currents in a Euclidean space coincides with the  notion of mass in the Federer--Fleming sense as a consequence of Lemma 9.2 and Theorem 9.5 in \cite{AK}.
Now, equality in the first step implies that $\psi_{\sharp}T=\curr{ B(R)\times \left\{0\right\}}$ (since the disk is the \textit{unique} mass minimizer \cite{Alm}). Equality in the second inequality, implies that 
$\Mbf(T)=\Mbf(\psi_\sharp T)$.  By applying  Theorem \ref{thm-rigidity}, with a rescaling sending $B(R)$ to $B(1)$, we obtain that $\psi$ must be an isometry, and thus $(X,d,T)$ equals  $(B(R),d_{Euc},\curr{B(R)})$. This concludes the proof.
\end{proof}

The positive mass theorem of Schoen--Yau and Witten~\cite{SY, W} states that any complete asymptotically flat manifold of nonnegative scalar curvature has nonnegative ADM mass, and if the ADM mass is zero then the manifold must be the Euclidean space. 
Here we give some details about the intrinsic flat stability of the positive mass theorem formulated by Huang--Lee--Sormani in \cite{HLSa}.
We first define their class of uniformly asymptotically flat graphical hypersurfaces of $\R^{n+1}$ with uniformly bounded depth and nonnegative scalar curvature.

\begin{definition}
For $n\ge3$, $r_0, \gamma, D>0$, and $\alpha<0$, define $\Gr$ to be the space of all smooth complete Riemannian manifolds with nonnegative scalar curvature, $(M^n,g)$ that admit a smooth Riemannian isometric embedding $\psi:M\to \R^{n+1}$ such that for some open $U\subset B(r_0/2)\subset \R^n$, the image $\psi(M)$ is the graph of a function $f\in C^\infty(\R^{n}\smallsetminus \overline{U}) \cap C^0(\R^{n}\smallsetminus {U})$,
\begin{equation*}
    \psi(M)=\left\{(x,f(x)): \,\, x\in \R^n \smallsetminus U\right\},
\end{equation*}
with empty or minimal boundary, 
that is, either $U=\emptyset$ or $f$ is constant on each component of $\partial U$ and $\lim_{x \to \infty}|D f|(x) \to 0$. Assume that for almost every $h$, the level set $f^{-1}(h)\subset \R^n$ is strictly mean-convex and outward-minimizing,
where strictly mean-convex means that the mean curvature is strictly positive, and outward-minimizing means that any region of $\R^n$ that contains the region enclosed by $f^{-1}(h)$ must have perimeter at least as large as $\mathcal{H}^{n-1}( f^{-1}(h))$. Assume that $f$ satisfies the following uniform asymptotic flatness conditions:
\begin{equation*}
 |D f| \le \gamma
\textrm{ for }|x|\ge r_0/2  \textrm{ and }
\lim_{x\to\infty} |D f| =0. 
\end{equation*}
Assume that 
$f(x)$ approaches a constant as $x\to\infty$. 
If $n=3$ or $4$, additionally assume that the graph is asymptotically Schwarzschild,
\begin{equation*}
\exists \Lambda\in \R \textrm{ such that }
 \left|f(x) - (\Lambda+ S_m(|x|)) \right| \le \gamma |x|^\alpha 
 \textrm{ for } |x|\ge r_0.
\end{equation*}
Finally assume that the regions
\begin{equation*}
\Omega(r_0)=\psi^{-1}(\overline{B(r_0)}\times\R)\,\,\,
\textrm{ and } \,\,\, \Sigma(r_0)=\partial\Omega(r_0) \setminus \partial M
\end{equation*}
have bounded depth
 \begin{equation*}
 \Depth(\Omega(r_0), \Sigma(r_0)) =\sup\left\{d_g(p,\Sigma(r_0)): p\in \Omega(r_0)\right\}
 \le D.
 \end{equation*}
\end{definition}

Above, for any $m>0$, the function $S_m :  \mathbb{R}^n \smallsetminus B((2m)^{1/(n-2)})   \to \R$ is such that its graph corresponds to the Riemannian isometric embedding into $\mathbb{R}^{n+1}$ of one end of the spatial $n$-dimensional Schwarzschild manifold of ADM mass $m>0$  such that its minimal boundary lies in the plane $\R^n\times \left\{0\right\}$.


Huang--Lee--Sormani stated that the preimages of the intersections
of the graph $\psi_j(M_j)$ with the cylinder $\overline{B(r)} \times \R$ converge to $\overline{B(r)}$.

\begin{theorem}\label{thm-HLS}
Let $M_j \in\Gr$ be a sequence of asymptotically flat manifolds 
so that $m_{ADM}(M_j) \to 0$. 
  Then for any $r  \geq r_0$, the sequence $  \Omega_j(r)$ subconverges in the intrinsic flat sense to 
  $ \overline{B(r)}$.
 \end{theorem}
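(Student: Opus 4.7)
The plan is to apply Theorem \ref{thm-stability} with $R=r$ to a subsequential intrinsic flat limit of $(\Omega_j(r),d_j,T_j)$, using a horizontal projection as the $1$-Lipschitz map. First I would set up the map. The graphical isometric embedding $\psi_j:M_j\to\R^{n+1}$ realizes $\Omega_j(r)$ as the graph of $f_j$ over $\overline{B(r)}\setminus U_j$. Define $\tilde\psi_j:\Omega_j(r)\to\R^{n+1}$ by $(x,f_j(x))\mapsto(x,0)$. Since the intrinsic graph metric dominates the horizontal Euclidean distance, $\tilde\psi_j$ is $1$-Lipschitz, and all images lie in the compact set $\overline{B(r)}\times\{0\}\subset\R^{n+1}$.

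Next I would extract subsequential limits. The bounds $|Df_j|\le\gamma$, $\Depth(\Omega_j(r),\Sigma_j(r))\le D$, and uniform boundedness of $m_{ADM}(M_j)$ give uniform control on $\diam(\Omega_j(r),d_j)$, $\Mbf(T_j)\le(1+\gamma^2)^{n/2}\vol(B(r))$, and $\Mbf(\partial T_j)$ (bounding the outer piece $\Sigma_j(r)$ by $(1+\gamma^2)^{(n-1)/2}\H^{n-1}(\partial B(r))$ and the inner piece $\partial M_j$ by the Penrose inequality). Wenger's compactness theorem then produces an intrinsic flat limit $(\Omega_{j_k}(r),d_{j_k},T_{j_k})\to(X,d,T)$, and the Arzelà-Ascoli theorem of the previous section extracts a further subsequence along which $\tilde\psi_{j_k}$ AA-flat converges to a $1$-Lipschitz limit $\tilde\psi:X\to\R^{n+1}$.

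Third, I would verify the hypotheses of Theorem \ref{thm-stability}. The Riemannian Penrose inequality gives $\H^{n-1}(\partial M_j)\le C\,m_{ADM}(M_j)^{(n-1)/(n-2)}\to 0$, so the inner boundary vanishes in the flat limit; the outward-minimizing assumption on level sets then yields $|U_j|\to 0$ via an isoperimetric argument. Separately, the Huang-Lee formula expressing $m_{ADM}(M_j)$ for graphical manifolds as a non-negative integral in $|Df_j|$ forces $|Df_j|\to 0$ in $L^p(B(r))$. Combining these gives
\[
\Mbf(T_{j_k})=\int_{B(r)\setminus U_{j_k}}\sqrt{1+|Df_{j_k}|^2}\,dx\longrightarrow\vol(B(r)),
\]
establishing hypothesis (2). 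For hypotheses (1) and (3), decompose $\partial T_j$ as the sum of the outer piece $\curr{\Sigma_j(r)}$ and (signed) $\curr{\partial M_j}$: the inner piece vanishes in the flat limit by the area estimate above, while on $\Sigma_j(r)$ the map $\tilde\psi_j$ is bi-Lipschitz onto $\partial B(r)\times\{0\}$ with constants in $[1,\sqrt{1+\gamma^2}]$ uniformly in $j$. Passing to the limit, $\tilde\psi$ is bi-Lipschitz on $\set(\partial T)$ with image $\partial B(r)\times\{0\}$, and $\tilde\psi_\sharp(\partial T)=\curr{\partial B(r)\times\{0\}}$. Theorem \ref{thm-stability} now identifies $(X,d,T)$ with $(\overline{B(r)},d_{Euc},\curr{\overline{B(r)}})$.

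The main obstacle I anticipate is the analytic step translating $m_{ADM}(M_j)\to 0$ into the quantitative decay $|Df_j|\to 0$ in $L^p(B(r))$ and $\H^{n-1}(\partial M_j),\,|U_j|\to 0$; this relies on the Huang-Lee graphical mass formula, the Riemannian Penrose inequality, and the outward-minimizing hypothesis, all used essentially as in \cite{HLSa}. The genuinely new ingredient is Theorem \ref{thm-rigidity}, which upgrades these ingredients to the identification of the flat limit through Theorem \ref{thm-stability}.
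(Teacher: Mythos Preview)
Your proposal follows essentially the same route as the paper: extract a subsequential intrinsic flat limit via Wenger compactness and a limit $1$-Lipschitz map via the Arzel\`a--Ascoli theorem, then feed the volume and boundary estimates from \cite{HLSa} into Theorem~\ref{thm-stability}. The one cosmetic difference is that you use the horizontal projection $(x,f_j(x))\mapsto(x,0)$, whereas the paper uses the full embedding $\psi_j$ into $\R^{n+1}$ and afterwards invokes \cite[Lemma~4.5]{HLSa} to see that the limit map lands in $\R^n\times\{0\}$; your choice is harmless and mildly streamlines that step.

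One caution: the sentence ``passing to the limit, $\tilde\psi$ is bi-Lipschitz on $\set(\partial T)$'' is precisely the step that had a gap in \cite[Lemma~5.1]{HLSa} and was repaired in \cite[Lemma~3.6]{HLP}, as the paper explicitly notes. Uniform bi-Lipschitz constants for $\tilde\psi_j$ on $\Sigma_j(r)$ do not by themselves guarantee that the Arzel\`a--Ascoli limit is injective on $\set(\partial T)$, because one must first identify $\set(\partial T)$ with an appropriate limit of the $\Sigma_j(r)$ and rule out collapsing or spurious components. You should cite \cite[Lemma~3.6]{HLP} here rather than treating this passage to the limit as automatic.
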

 
Now we review the proof in \cite{HLSa}, adding the details that make use of Theorem \ref{thm-stability}, and thus filling in the gap in \cite{HLSa}.

\begin{proof}
In the proof of Theorem 3.1 in \cite{HLSa} uniform upper bounds for 
$\diam(\Omega_j(r))$, $\vol(\partial \Omega_j(r))$ and $\vol(\Omega_j(r))$,
that only depend on the parameters $n,r_0, \gamma, D, \alpha$ and $r$, were obtained. Hence, it is concluded by applying Wenger's compactness theorem and Sormani's Arzel\`{a}-Ascoli's theorem, that one gets subconvergence with respect to the intrinsic flat distance of $\Omega_j(r)$ to an integral current space $(\Omega_r, d_r, T_r)$
and that a $1$-Lipschitz function $\psi_r : \Omega_r \to \overline{B(r)} \times [-D, D]$ exists, which is the limit of the corresponding subsequence of $1$-Lipschitz functions $\psi_j|_{\Omega_j(r)}:\Omega_j(r) \to \overline{B(r)} \times [-D, D]$.

In Corollary 4.4  in \cite{HLSa}  it is shown that 
\begin{equation*}
\label{limsupVolume}
\limsup_{j \to \infty}  \vol(\Omega_j(r)) = \vol(B(r)).
\end{equation*}
In Lemma 4.5 in \cite{HLSa} it is shown that 
$\psi_r ( \Omega_r ) \subset \overline{B(r)} \times \{0\}$, and in particular, 
$\psi_r ( \set(\partial T_r)) \subset \partial \overline{B(r)} \times \{0\}$. Then in Lemma 3.6 in \cite{HLP} (note that this corresponds to Lemma 5.1 in \cite{HLSa} but it had a gap that was corrected in in \cite{HLP}), it is shown that 
$$\psi_r|_{\set(\partial T_r) }: \set(\partial T_r) \to \partial \overline{B(r)} \times \{0\}$$
is bi-Lipschitz and in particular
$\partial \curr{ B(r)\times \left\{0\right\}}=\psi_{r\sharp}(\partial T_r)$. 
With an obvious rescaling, sending $B(r)$ to $B(1)$, we
can apply Theorem \ref{thm-stability} to obtain that $\psi_r$ must be an isometry, and that the limit integral current space $(\Omega_r,d_r,T_r)$ is isometric to $(B(r),d_{Euc},\curr{B(r)})$. This ends the proof of the stability property.
\end{proof}

\bibliographystyle{plain}
\bibliography{minimality}

\end{document}